\newcommand{\R}{{\mathbb R}}
\newcommand{\C}{{\mathbb C}}
\newcommand{\N}{{\mathbb N}}
\newcommand{\Z}{{\mathbb Z}}
\newcommand{\m}{{\textbf{m}}}
\newcommand{\Rn}{{\R}^n}
\newcommand{\dx}{\, \mathrm{d} x}
\newcommand{\dz}{\, \mathrm{d} z}
\newcommand{\hn}{\mathcal{H}^{n-1}}
\newcommand{\Sn}{{\mathbb{S}^{n-1}}}
\newcommand{\ol}{\overline}
\newcommand{\wt}{\widetilde}
\newcommand{\sm}{\setminus}
\newcommand{\Mnn}{{\mathbb{M}^{n\times n}_{sym}}}
\newcommand{\dod}{{\partial_D \Omega}}
\newcommand{\don}{{\partial_N \Omega}}
\newcommand{\dom}{{\partial \Omega}}
\newcommand{\weak}{\rightharpoonup}
\newcommand{\mres}{\mathbin{\vrule height 1.6ex depth 0pt width
0.13ex\vrule height 0.13ex depth 0pt width 1.3ex}}
\DeclareMathOperator*{\aplim}{ap\,lim}
\theoremstyle{plain}
\theoremstyle{plain}
\newtheorem{theorem}{Theorem}[section]
\newtheorem{corollary}[theorem]{Corollary}
\newtheorem{proposition}[theorem]{Proposition}
\newtheorem{lemma}[theorem]{Lemma}
\theoremstyle{definition}
\newtheorem{definition}[theorem]{Definition}
\theoremstyle{remark}
\newtheorem{remark}[theorem]{Remark}
\numberwithin{equation}{section}
\title[Strong solutions to the Dirichlet problem for the Griffith energy]{Existence of strong solutions to the Dirichlet problem for the Griffith energy}
\author{Antonin Chambolle \and Vito Crismale}
\address{CMAP, \'Ecole Polytechnique, CNRS, 91128 Palaiseau Cedex, France}
\email[Antonin Chambolle]{antonin.chambolle@cmap.polytechnique.fr}
\email[Vito Crismale]{vito.crismale@polytechnique.edu}
\begin{document}
\begin{abstract}
  In this paper we continue the study of the Griffith brittle fracture energy minimisation under Dirichlet
  boundary conditions, suggested  by Francfort and Marigo in 1998~\cite{FraMar98}. In a recent
  paper~\cite{CC18}  we proved the existence of weak minimisers  of the problem. Now we show that these minimisers are indeed strong solutions, namely their jump set
  is closed and they are smooth away from the jump set and continuous up to the Dirichlet boundary.
  This is obtained by extending up to the boundary the recent regularity results of
  Conti, Focardi and Iurlano~\cite{CFI17DCL} and Chambolle, Conti, Iurlano~\cite{ChaConIur17}.
\end{abstract}

\keywords{Brittle fracture, minimisation, regularity, strong solutions, generalised special functions of bounded
deformation.}
\subjclass[2010]{49Q20, 49N60, 35R35, 26A45, 74R10.}

\maketitle

\tableofcontents

\section{Introduction}

We prove that the minimisation problem for the Griffith brittle fracture energy \cite{Griffith} under Dirichlet boundary conditions admits so-called \emph{strong solutions},  if the Dirichlet part of the boundary is of class $C^1$.  Given an  open  bounded  
\emph{reference configuration} $\Omega \subset \Rn$, $n\geq 2$, with  $\dom$ of null $n$-dimensional Lebesgue measure,  $\dod \subset \dom$ (relatively) open,  $K \subset \Omega \cup \dod$ (relatively) closed, and a boundary datum $u_0 \in W^{1,\infty}(\Rn; \Rn)$, a strong solution $(\ol u, \ol \Gamma)$ minimises
\begin{equation}\label{enGriffith}\tag{G}
(u, \Gamma) \mapsto \hspace{-1em} \int \limits_{\Omega'\sm (\Gamma \cup K) } \hspace{-1em}\C e(u) \colon e(u) \dx + 2\beta\, \hn\big((\Gamma \sm K) \cap \Omega'\big)
\end{equation}
in the class
\begin{equation*}
\mathcal{A}:= \{ (u, \Gamma) \colon \Gamma \text{ closed}, \,  u=u_0 \text{ in } \Omega' \sm (\Omega\cup \dod),\,  u \in C^1(\Omega \sm (\Gamma \cup K); \Rn) \cap C(\Omega' \sm (\Gamma \cup K); \Rn) \}\,,
\end{equation*}
 where $\Omega'$ is open with $\Omega \subset \Omega'$, $\Omega' \cap \dom = \dod$, and $\mathrm{diam}\, \Omega' \leq 2 \, \mathrm{diam}\, \Omega$ (in the following $\don:= \dom \sm \dod$).
In \eqref{enGriffith}, $e(u)$ is the symmetrised gradient of $u$, expressing the \emph{infinitesimal elastic strain}, $\C$ is the \emph{Cauchy stress tensor}, satisfying
\begin{equation}\label{eq:hpC}
\C(\xi-\xi^T)=0 \quad \text{and} \quad \C \xi \cdot \xi \geq c_0\, |\xi+\xi^T|^2 \quad\text{for all }\xi \in \Mnn\,,
\end{equation}
and $\beta>0$ represents the \emph{toughness} of the material. 
The variational approach to fracture by Francfort and Marigo \cite{FraMar98} is based, for quasistatic evolutions in brittle fracture, on successive minimisation of \eqref{enGriffith}, where $K$ is the crack set computed at the previous step  (it is not restrictive to assume $K=\emptyset$,
 replacing $\Omega$ with the new reference configuration $\Omega \sm K$).

The strategy to prove existence of strong minimisers is in the spirit of De Giorgi's approach to Mumford-Shah functional \cite{MumSha}: study the existence of minimisers for a \emph{weak formulation} where $\Gamma$ is replaced by the intrinsic jump set $J_u$ of $u$, for $u$ in a suitable admissible class such that $J_u$ is countably $(\hn, n{-}1)$ rectifiable, and prove that any weak minimiser corresponds to a strong one, that is $J_u$ is closed and $u$ is of class  $C^1$ outside  $J_u$. (In fact, when we say $J_u$ closed we mean always essentially closed, that is closed up to a $\hn$-negligible set,  and $u$ is indeed of class $C^\infty$ outside $J_u$.)

For Mumford-Shah functional, this has been realised by Ambrosio \cite{Amb89UMI, Amb90GSBV, AmbNew95} and De Giorgi, Carriero, Leaci \cite{DeGCarLea} (see also \cite{DMMorSol92, MadSol01} for a different approach, and e.g.\ \cite{FonFus97, David05Libro} and the references in \cite{LemReviewMS} for other regularity results). For the case of brittle fracture in the antiplane shear setting, that formally corresponds to remove the fidelity term in the Mumford-Shah functional and consider the Dirichlet minimum problem, or for brittle fracture with finite strain elasticity, the regularity up to the boundary is proven by Babadjian and Giacomini in \cite{BabGia14}, which inspired the present work (we refer to e.g.\ \cite{FraLar03, DMToa02, DMFraToa07, DMLaz10} for existence of quasistatic evolutions for antiplane brittle fracture or brittle fracture with finite strain elasticity).

The existence of weak minimiser of \eqref{enGriffith} has been proven recently in \cite{CC18} with a general compactness and lower semicontinuity result for $GSBD$, the space introduced by Dal Maso in \cite{DM13} to include all the displacements with finite Griffith energy. Weak minimisers of \eqref{enGriffith} were known to exist under simplifying assumptions: an \emph{a priori} $L^\infty$ assumption on displacement, for \cite{BelCosDM98} in the $SBD$ space \cite{AmbCosDM97}; the connectedness of $\Gamma$ in \cite{Cha03}; a mild fidelity term in \cite{DM13}; in dimension 2 in \cite{FriSol16} (we mention also \cite{Fri19} and the approximations in \cite{Iur14, FriPWKorn, CFI17Density, CC17, Fri17ARMA, ChaConFra18ARMA, Cha04, Cri19}).

The regularity result analogous to De Giorgi, Carriero, Leaci \cite{DeGCarLea} is obtained in \cite{CFI17DCL} in dimension 2  (for more general energies)  and in \cite{ChaConIur17} in general dimension (see also \cite{CFI18nota}), ensuring closedness in $\Omega$ of the jump set of weak minimisers, thus existence of strong minimisers for the problem with fidelity term. The present work extends this regularity up to the boundary (that is in $\Omega'$), assuming $\dod$ of class $C^1$, in the main results Theorem~\ref{teo:regDirLoc} and Corollary~\ref{cor:1106181816}. This shows  existence of strong minimisers for \eqref{enGriffith}. 

As in the other regularity results, the key point (Theorem~\ref{teo:Teor3.4BG}) is a density lower bound for the jump set of minimisers, that now holds for all balls centered on a point in $J_u$,  contained in the enlarged domain $\Omega'$, with radius small enough  (and at a small security distance from $\partial(\dod)$ if $\Omega$ is not of class $C^1$, see also Remark~\ref{rem:sceltaeta}). This is the analogous of \cite[Theorem~3.4]{BabGia14}, while in \cite{DeGCarLea, CFI17DCL, ChaConIur17} the density lower bound is proven only for balls contained in $\Omega$.

Following the usual scheme by contradiction, we are led to prove a decay estimate for the Griffith energy of local minimisers in balls with vanishing radius and vanishing $(n{-}1)$-dimensional density of the jump set. If the balls are contained in $\Omega$ this is done as in \cite{ChaConIur17}, showing that (local) quasi-minimisers with vanishing jump set on the ball $B(0,1)$, obtained by blow-up, converge to a local minimiser for the bulk energy. 

In the case where the balls intersect $\Omega' \sm \Omega$ we have a sequence of quasi-minimisers for a problem with a prescribed displacement outside $\Omega$: we then modify (Theorem~\ref{thm:Teor4}) the compactness result for functions with vanishing jump \cite[Theorem~4]{ChaConIur17} to include the case of a prescribed value somewhere. This passes also through an approximation 
of $GSBD$ displacements with small jump set and a prescribed value in a subdomain $D$, through functions keeping the same value both in $D$ and near the boundary, and smooth in the interior (Theorem~\ref{teo:teor3CCI}). The proof of the compactness result is done in the spirit of the corresponding \cite[Theorem~3]{ChaConIur17}, employing 
a Korn-Poincaré-type estimate by \cite{CCF16}.

The regularity of minimisers with prescribed value on a subdomain is obtained in Theorem~\ref{teo:ellreg2} by adapting a regularity result for solution to elliptic systems in \cite{McLean}. Differently from the analogous \cite[Theorem~3.8]{BabGia14}, we only consider the case of quadratic growth for the bulk energy. Indeed, even in the unconstrained minimisation problem, the desired regularity { seems} by now available only for quadratic growth in general dimension, cf.\ \cite{CFI18nota, ChaConIur17}. This is the only point that prevents to obtain a more general regularity result in the case where the bulk energy has growth $p>1$ in $e(u)$ for $|e(u)|$ large,  as in \cite{CFI17DCL}. 

 The decay estimate guarantees a uniform density lower bound for $\hn$ in balls centered in $\ol{J^*_u}$, with radii less than a uniform value $\varrho_0$, where $J^*_u \subset J_u$ is given by the jump points of full density with respect to $\hn$ (Theorem~\ref{teo:Teor3.4BG}). 
 This implies that $J^*_u$ is essentially closed, then by  elliptic  regularity we get that $u$ is of class  $C^\infty$  in { $\Omega \sm \ol{J^*_u}$ and $\ol{J_u}$ coincides with $\ol{J^*_u}$ in $\Omega$}.
 {  The continuity of the minimiser $u$ up to $\dod \sm J^*_u$ is derived from the fact that by minimality, $\|e(u)\|^2_{L^2(B_\varrho(x))}$ is controlled by $\varrho^{n-1}$ for any $B_\varrho(x) \subset \Omega'$.
   Arguing as in Campanato's theorem (with  infinitesimal rigid motions in place of averages on balls)
   we deduce that $|u_0(x_0)-u(x)|\leq C \sqrt{|x_0-x|}$ near any $x_0 \in \dod \sm \ol{J^*_u}$. In particular,
   it follows that $J_u\subset \ol{J^*_u}$ in $\Omega \cup \dod$. }

As a  concluding  remark, we observe that in dimension~2, the authors of~\cite{BabGia14} prove the existence of a strong quasistatic evolution, namely minimising the antiplane version of \eqref{enGriffith} with respect to its own (closed) jump set at any time $t$.
The starting point is therein the existence result \cite{FraLar03}, that has been recently extended
{ to planar elasticity} by Friedrich and Solombrino in \cite{FriSol16} in dimension~2.
 In the present context it is immediate to combine our density lower bound with the geometrical 2d argument in \cite[Proposition~5.5]{BabGia14} to get that the sequence of piecewise-constant in time evolutions $u_k$ in the Francfort-Marigo approach (obtaining by dividing the given time interval $[0,T]$ by $k{+}1$ nodes $t_k^i= i \frac{T}{k}$ and interpolating in time the solutions to the incremental minimum problems in the nodes) satisfies a density lower bound uniform in time and in $k$.
However, the improvement of the evolution in \cite{FriSol16} seems  
delicate. Indeed, the tool of $\sigma_p$-convergence, developed in \cite{DMFraToa07} and crucial in \cite{BabGia14}, 
is not directly applicable now, as we do not work in $SBV^p$.

\section{Notation and preliminaries}\label{sec:Sec2}

We denote by $\mathcal{L}^n$ and $\mathcal{H}^k$ the $n$-dimensional Lebesgue measure and the $k$-dimensional Hausdorff measure. For any locally compact subset $B$ of $\Rn$, the space of bounded $\R^m$-valued Radon measures on $B$ is indicated by $\mathcal{M}_b(B;\R^m)$  (the space of $\R^m$-valued Radon measures on $B$ is denoted by $\mathcal{M}(B;\R^m)$).  For $m=1$ we write $\mathcal{M}_b(B)$ for $\mathcal{M}_b(B;\R)$,  $\mathcal{M}(B)$ for $\mathcal{M}(B;\R)$,  and $\mathcal{M}^+_b(B)$ for the subspace of positive measures of $\mathcal{M}_b(B)$. For every $\mu \in \mathcal{M}_b(B;\R^m)$, its total variation is denoted by $|\mu|(B)$.
We write $\chi_E$ for the indicator function of any $E\subset \R^n$, which is 1 on $E$ and 0 otherwise, and $A_1 \Subset A_2$ for two open sets $A_1$, $A_2$ such that $\ol A_1 \subset A_2$.
For every $x\in \Rn$ and $\varrho>0$, $B_\varrho(x)$ is the open ball with center $x$ and radius $\varrho$.  We usually write in the following $B_\varrho$ for $B_\varrho(0)$. 
\par
\medskip
\paragraph{\bf Function spaces.}
Let $U\subset \Rn$ be open and bounded.  For any $\mathcal{L}^n$-measurable function $v\colon U\to \R^m$ the \emph{approximate jump set} $J_v$ is the set of points $x\in U$ for which there exist $a$, $b\in \R^m$, with $a \neq b$, and $\nu\in \Sn$ such that  (see e.g.\ \cite[Section~3.6]{AFP}) 
\begin{equation*}
\aplim\limits_{(y-x)\cdot \nu>0,\, y \to x} v(y)=a\quad\text{and}\quad \aplim\limits_{(y-x)\cdot \nu<0, \, y \to x} v(y)=b\,.
\end{equation*} 
A function $v\in L^1(U)$ is a \emph{function of bounded variation} on $U$ ($v\in BV(U)$), if $\mathrm{D}_i v\in \mathcal{M}_b(U)$ for $i=1,\dots,n$, where $\mathrm{D}v=(\mathrm{D}_1 v,\dots, \mathrm{D}_n v)$ is its distributional gradient. A vector-valued function $v\colon U\to \R^m$ is in $BV(U;\R^m)$ if $v_j\in BV(U)$ for every $j=1,\dots, m$.
The space $BV_{\mathrm{loc}}(U)$ is the space of $v\in L^1_{\mathrm{loc}}(U)$ such that  $\mathrm{D}_i v\in \mathcal{M}(U)$  for $i=1,\dots,n$.

A function $v\in L^1(U;\Rn)$ belongs to the space of \emph{functions of bounded deformation} $BD(U)$ if its distributional symmetric gradient $\mathrm{E}v$ belongs to  $\mathcal{M}_b(U;\Mnn)$. 
It is well known (see \cite{AmbCosDM97, Tem}) that for $v\in BD(U)$, $J_v$ is countably $(\hn, n-1)$ rectifiable, and that
\begin{equation}
\mathrm{E}v=\mathrm{E}^a v+ \mathrm{E}^c v + \mathrm{E}^j v\,,
\end{equation}
where $\mathrm{E}^a v$ is absolutely continuous with respect to $\mathcal{L}^n$, { the Cantor part}
$\mathrm{E}^c v$ is singular with respect to $\mathcal{L}^n$ and such that $|\mathrm{E}^c v|(B)=0$ if $\hn(B)<\infty$, while $\mathrm{E}^j v$ is concentrated on $J_v$. The density of $\mathrm{E}^a v$ with respect to $\mathcal{L}^n$ is denoted by $e(v)$, and we have that (see \cite[Theorem~4.3]{AmbCosDM97}) for $\mathcal{L}^n$-a.e.\ $x\in U$
\begin{equation*}\label{3105171931}
\lim_{\varrho \to 0^+} \int \limits_{B_\varrho(x)} \frac{\big|\big(v(y)-v(x)-e(v)(x)(y-x)\big)\cdot (y-x)\big|}{|y-x|^2}\, \mathrm{d}y=0\,.
\end{equation*}
The space $SBD(U)$ is the subspace of all functions $v\in BD(U)$ such that $\mathrm{E}^c v=0$, while for $p\in (1,\infty)$
\begin{equation*}
SBD^p(U):=\{v\in SBD(U)\colon e(v)\in L^p(\Omega;\Mnn),\, \hn(J_v)<\infty\}\,.
\end{equation*}
Analogous properties hold for $BV$, as the countable rectifiability of the jump set and the decomposition of $\mathrm{D}v$, and the spaces $SBV(U;\R^m)$ and $SBV^p(U;\R^m)$ are defined similarly, with $\nabla v$, the density of $\mathrm{D}^a v$, in place of $e(v)$.
For more details on the spaces $BV$, $SBV$ and $BD$, $SBD$ functions, we refer to \cite{AFP} and to \cite{AmbCosDM97, BelCosDM98, Bab15, Tem}, respectively.

We briefly recall the definition and the main properties of $GSBD$ functions from \cite{DM13}, referring to that paper for a general treatment.
\begin{definition}\label{def:GBD}
A $\mathcal{L}^n$-measurable function $v\colon U\to \Rn$ is in $GBD(U)$ if there exists $\lambda_v\in \mathcal{M}^+_b(U)$ such that for every $\xi \in \Sn$ and $\tau \in C^1(\R)$ with $-\tfrac{1}{2}\leq \tau \leq \tfrac{1}{2}$ and $0\leq \tau'\leq 1$, we have 
\[
\mathrm{D}_\xi\big(\tau(v\cdot \xi)\big)=\mathrm{D}\big(\tau(v\cdot \xi)\big)\cdot \xi \in \mathcal{M}_b(U)\,,
\]
and 
\begin{equation*}
\big|\mathrm{D}_\xi\big(\tau(v\cdot \xi)\big)\big|(B)\leq \lambda_v(B) \quad\text{ for $B\subset U$ Borel};
\end{equation*}
The function $v$ belongs to $GSBD(U)$ if $v\in GBD(U)$ and 
\[
U^\xi_y \ni t \mapsto \widehat{v}^\xi_y(t):=v(y+t \xi) \cdot \xi \in SBV_{\mathrm{loc}}(U^\xi_y)
\]
 for every $\xi \in \Sn$ and for $\hn$-a.e.\ $y\in \Pi^\xi$, where $U^\xi_y:= \{ t\in \R \colon y + t \xi \in U\}$  and $\Pi_\xi:=\{ y \in \Rn \colon y \cdot \xi = 0\}$. 
\end{definition}
For every $v\in GBD(U)$ the \emph{approximate jump set} $J_v$ is still countably $(\hn,n-1)$-rectifiable (\textit{cf.}~\cite[Theorem~6.2]{DM13}) and $v$ has an \emph{approximate symmetric gradient} $e(v)\in L^1(U;\Mnn)$, characterised by \
\begin{equation*}
\lim_{\varrho \to 0^+} \int \limits_{B_\varrho(x)} \psi\bigg(\frac{\big(v(y)-v(x)-e(v)(x)(y-x)\big)\cdot (y-x)}{|y-x|^2} \bigg)\, \mathrm{d}y=0\,
\end{equation*}
for $\psi$ a homeomorphism between $\Rn$ and a bounded subset of $\Rn$. If $v \in GSBD(U)$, with $e(v) \in L^p(U; \Mnn)$, $p>1$, and $\hn(J_v) < \infty$, then $v \in GSBD^p(U)$.

The following result has been proven by Chambolle, Conti, and Francfort in \cite{CCF16}, stated in $SBD^p$. The proof, only based on one dimensional slicing, holds in fact for functions in $GSBD^p$, and this has been employed for instance in  \cite{ChaConIur17, CC17}. 
\begin{proposition}\label{prop:3.1CCI}
Let $0<\theta''<\theta' <1$, $Q =(-r,r)^n$, $Q'=(-r\theta', r\theta')^n$, $u\in GSBD^p(Q)$, $p\in [1,\infty)$. Then there exist a Borel set $\omega\subset Q'$ and an affine function $a\colon \Rn\to\Rn$ with $e(a)=0$ such that $\mathcal{L}^n(\omega)\leq c_* r \hn(J_u)$ and
\begin{equation}\label{prop3iCCF16}
\int\limits_{Q'\setminus \omega}(|u-a|^{p}) ^{1^*} \dx\leq c_* r^{(p-1)1^*}\Bigg(\int\limits_Q|e(u)|^p\dx\Bigg)^{1^*}\,,
\end{equation}
 with $1^*:= \frac{n}{n-1}$. 
If additionally $p>1$, then there is $q>0$ (depending on $p$ and $n$) such that, for a given mollifier $\varrho_r\in C_c^{\infty}(B_{(\theta'-\theta'')r})\,, \varrho_r(x)=r^{-n}\varrho_1(x/r)$, the function $v=u \chi_{Q'\setminus \omega}+a\chi_\omega$ obeys
\begin{equation}\label{prop3iiCCF16}
\int\limits_{Q''}|e(v\ast \varrho_r)-e(u)\ast \varrho_r|^p\dx\leq c_*\left(\frac{\hn(J_u)}{r^{n-1}}\right)^q \int\limits_Q|e(u)|^p\dx\,,
\end{equation}
where $Q''=(-r \theta'', r \theta'')^n$.
The constant in  \eqref{prop3iCCF16}  depends only on $p$, $n$, and $\theta'$, the one in  \eqref{prop3iiCCF16}  also on $\varrho_1$ and $\theta''$.  
\end{proposition}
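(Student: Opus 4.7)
The plan is to prove both estimates by one-dimensional slicing, which is natural for $GSBD^p$ functions whose only a priori regularity is the $SBV^p_{\mathrm{loc}}$ behaviour of the slices $\widehat u^\xi_y$. The starting point is a 1D Sobolev--Poincaré lemma in $SBV^p$: on an interval $I$ of length $2r$, for $f \in SBV^p(I)$ one can find an affine function $a_f$ and a set $E \subset I$ with $|E| \le c\,r\,\ho(J_f)$ such that $I \setminus E$ is a finite union of subintervals on which $f$ is Sobolev, and on which $f - a_f$ satisfies a 1D Poincaré estimate in terms of $\|f'\|_{L^p(I)}$ with the right scaling to feed into a Gagliardo--Nirenberg argument in $n$ dimensions. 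The proof of this 1D lemma goes by removing a short subinterval around each jump, applying classical Sobolev--Poincaré on the maximal surviving interval, and absorbing the remaining small pieces into $E$.

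Next I would fix a finite family of directions $\{\xi_k\}_{k=1}^N$ whose symmetric tensor products $\xi_k\otimes\xi_k$ span $\Mnn$ (for instance the canonical basis together with the diagonals $(e_i+e_j)/\sqrt 2$), slice $Q$ in each direction, and apply the 1D lemma to each slice $\widehat u^{\xi_k}_y$. By Fubini, each directional bad set $\omega^k \subset Q'$ satisfies $\mathcal L^n(\omega^k)\le c\,r\,\hn(J_u)$, so $\omega := \bigcup_k \omega^k$ has the desired measure bound. The \emph{main obstacle}, as I see it, is to upgrade the slicewise affine approximations $a^{\xi_k}_y$ into a \emph{single} global infinitesimal rigid motion $a\colon \Rn\to\Rn$ with $e(a)=0$. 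My plan is to average the 1D affine slopes of $u\cdot\xi_k$ across the parameter $y$: these averages determine a symmetric matrix approximating $e(u)$ and an antisymmetric matrix $W$; the symmetric part is controlled by $\|e(u)\|_{L^p(Q)}$ on the good set and can be swept into the $L^{p\cdot 1^*}$ estimate for $u-a$, while $a(x) := Wx + b$ with $b$ chosen to fix translations matches the slicewise fits up to an admissible error. Inequality \eqref{prop3iCCF16} then follows by combining the slice estimates via Fubini and a Gagliardo--Nirenberg-type inequality, with the power $r^{(p-1)1^*}$ emerging from rescaling a cube of side $r$.

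For part (ii) with $p>1$, observe that $v := u\chi_{Q'\setminus\omega} + a\chi_\omega$ coincides with $u$ outside $\omega$ and that $e(a)=0$ on $\omega$; consequently the distributional symmetric gradient of $v$ equals $e(u)\mathcal L^n$ on $Q'\setminus\omega$, plus a singular measure supported on $\partial\omega$ whose total variation is controlled by a trace-type estimate in $\|u-a\|_{L^p}$ on a neighbourhood of $\partial\omega$. After convolution at scale $r$, the difference $e(v\ast\varrho_r) - e(u)\ast\varrho_r$ is supported in an $r$-tube around $\omega$, and its $L^p$ norm picks up a factor $\mathcal L^n(\omega)/r^n \le c\,\hn(J_u)/r^{n-1}$ raised to a positive power; interpolating this with the $L^p$ control of $u-a$ coming from (i) (via Hölder from the $L^{p\cdot 1^*}$ bound) produces \eqref{prop3iiCCF16} with a suitable exponent $q>0$.
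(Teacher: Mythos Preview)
This proposition is not proved in the paper; it is quoted from \cite{CCF16}, with the remark that the slicing proof there (written for $SBD^p$) carries over verbatim to $GSBD^p$. Your slicing strategy and your choice of a finite spanning set of directions are therefore the right starting point and match the method of \cite{CCF16}.

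Two steps of your sketch, however, do not go through as written. First, building a single rigid motion $a$ by ``averaging the 1D affine slopes'' of the slicewise fits $a^{\xi_k}_y$ is not a workable plan: the slicewise intercepts depend on $y$ in a way that is not compatible across directions, and nothing in your outline controls this mismatch. In \cite{CCF16} the argument is organised differently: once $\omega$ is defined, one shows that on $Q'\setminus\omega$ the function has controlled directional difference quotients in every $\xi_k$, which places (a suitable modification of) $u\chi_{Q'\setminus\omega}$ in a space where a Poincar\'e--Korn inequality applies directly; the rigid motion $a$ is the one that inequality produces, and the exponent $p\,1^*$ comes from an embedding, not from gluing one-dimensional estimates.

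Second, for part (ii), invoking ``a singular measure supported on $\partial\omega$ whose total variation is controlled by a trace-type estimate'' is unjustified: $\omega$ is only a Borel set of small measure and need not have finite perimeter, so there are no traces on $\partial\omega$. The clean route avoids $\partial\omega$ entirely: write $v-u=(a-u)\chi_\omega$, put the derivative on the mollifier so that $e\big((v-u)\ast\varrho_r\big)$ is controlled by $r^{-1}\,|(a-u)\chi_\omega|\ast|\nabla\varrho_1|_r$, add the term $e(u)\chi_\omega\ast\varrho_r$, and then apply H\"older using $|\omega|\le c\,r\,\hn(J_u)$ together with the $L^{p1^*}$ bound from part (i). This interpolation is where the exponent $q$ appears.
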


\begin{remark}\label{0406181929}
 By H\"older inequality and \eqref{prop3iCCF16} it follows that
 \begin{equation}\label{prop3CCFHolder}
\int\limits_{Q'\setminus \omega} |u-a|^{p} \dx\leq \mathcal{L}^n(Q'\setminus \omega)^{1/n}\Bigg(\int\limits_{Q'\setminus \omega}(|u-a|^{p}) ^{1^*} \dx\Bigg)^{1/1^*}\hspace{-1em}\leq  c r^p \int\limits_Q|e(u)|^p\dx
 \end{equation}
 \end{remark}
 \par
\medskip
\paragraph{\bf Some regularity results.}
 For every $\gamma\geq 0$ let
\begin{equation}\label{1006181950}
H_\gamma:=\{ x=(x', x_n) \in \Rn \colon x_n > -\gamma\}\,,
\end{equation}
and  (recall the notation $B_\varrho$ for $B_\varrho(0)$) 
\begin{equation}\label{0906181125}
E_{0,\gamma}(u, B_\varrho):=\begin{dcases}
\int \limits_{B_\varrho} \C e(u) \colon e(u) \dx \quad\text{if }u\in H^1(B_\varrho;\Rn),\, u=0 &\text{a.e.\ in }B_\varrho \sm H_\gamma\,,\\
 +\infty \quad &\text{otherwise.}
\end{dcases}
\end{equation}

\begin{definition}\label{def:locmin}
We say that $u\in H^1(B_\varrho;\Rn)$ is a local minimiser of $E_{0,\gamma}(\cdot, B_\varrho)$ if 
\begin{equation*}
E_{0,\gamma}(u, B_\varrho) \leq E_{0,\gamma}(v, B_\varrho)
\end{equation*}
for every $v \in H^1(B_\varrho;\Rn)$ with $\{u\neq v\}\Subset B_\varrho$.
\end{definition}
We now consider two regularity results for minimisers of $E_{0,\gamma}$, that solve in a weak form the elliptic equation $\mathrm{div\,}\C e(u)=0$ in $B_\varrho \cap H_\gamma$. 
These are useful to prove the decay estimate in Lemma~\ref{le:6.6BG}.
The first one follows from the fact that the solutions of $\mathrm{div\,}\C e(u)=~0$ are expressed through a $(2{-}n)$-essentially homogeneous $\C$-dependent kernel (see \cite[Theorem~6.2.1, paragraph 6.2]{Mor66} and \cite[Section~5]{ChaConIur17}). The second one concerns the boundary estimates and its proof, in Appendix, follows the lines of \cite[Theorem~4.18, (i)]{McLean}.
 \begin{theorem}\label{teo:ellreg1}
 Let $\gamma>1/2$ and $u\in H^1(B_{1/2};\Rn)$ be a local minimiser of $E_{0,\gamma}(\cdot, B_{1/2})$. Then there exists $C_0>0$, depending only on $\C$  and $n$,  such that
 \begin{equation*}
 \int \limits_{B_{\varrho/2}} \C e(u)\colon e(u) \dx \leq C_0 \, \varrho^n \int \limits_{B_{1/2}} \C e(u) \colon e(u)\dx\,,
 \end{equation*}
 for every $\varrho\leq 1/2$. 
 \end{theorem}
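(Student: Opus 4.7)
The first observation is that since $\gamma > 1/2$, the ball $B_{1/2}$ is entirely contained in $H_\gamma$, so the constraint ``$u=0$ in $B_{1/2}\setminus H_\gamma$'' imposed by the definition of $E_{0,\gamma}$ is vacuous here. Hence $u$ is simply a local minimiser in $H^1(B_{1/2};\Rn)$ of the unconstrained quadratic elastic functional, and a standard first variation argument against any test field $\varphi \in C_c^\infty(B_{1/2};\Rn)$ shows that $u$ is a weak solution of the linear elliptic system $\mathrm{div}\,\C e(u)=0$ in $B_{1/2}$.

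Next, I would invoke the interior regularity theory for elliptic systems with constant coefficients. By the kernel representation of \cite[Theorem~6.2.1]{Mor66} already cited in the paragraph preceding the statement, such weak solutions are real analytic in the interior and, differentiating the $(2-n)$-homogeneous kernel, one obtains the classical $L^\infty$--$L^2$ estimate
\begin{equation*}
\sup_{B_{1/4}} |e(u)|^2 \,\leq\, c_n \int \limits_{B_{1/2}} |e(u)|^2 \dx\,,
\end{equation*}
with $c_n$ depending only on $\C$ and $n$. An alternative derivation goes via a Caccioppoli inequality for the constant-coefficient system on $B_{1/2}$ combined with Korn's inequality, after subtracting a suitable infinitesimal rigid motion from $u$; in either case the constant depends only on $\C$ and $n$.

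For $\varrho\leq 1/2$ one has $B_{\varrho/2}\subset B_{1/4}$, and the conclusion follows by using the ellipticity assumption \eqref{eq:hpC}, which gives both $\C e(u)\colon e(u) \leq \|\C\|\,|e(u)|^2$ and $|e(u)|^2 \leq c_0^{-1}\, \C e(u)\colon e(u)$. Combining these with the interior $L^\infty$ bound,
\begin{equation*}
\int \limits_{B_{\varrho/2}} \C e(u)\colon e(u) \dx \,\leq\, \|\C\|\, \mathcal{L}^n(B_{\varrho/2}) \,\sup_{B_{1/4}} |e(u)|^2 \,\leq\, C_0\, \varrho^n \int \limits_{B_{1/2}} \C e(u)\colon e(u) \dx\,,
\end{equation*}
with $C_0=C_0(\C,n)$ as required. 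The only nontrivial ingredient is the pointwise interior estimate for $e(u)$, which is however entirely classical in this constant-coefficient setting; accordingly I do not anticipate any real obstacle. The boundary analogue, needed in Theorem~\ref{teo:ellreg2}, is where the genuine difficulty lies and is treated separately in the appendix.
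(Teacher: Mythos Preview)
Your proof is correct and follows exactly the approach indicated in the paper, which does not give a detailed proof but simply refers to the kernel representation in \cite[Theorem~6.2.1]{Mor66} (and \cite[Section~5]{ChaConIur17}) for the interior estimate on solutions of $\mathrm{div}\,\C e(u)=0$. Your observation that $\gamma>1/2$ makes the constraint vacuous on $B_{1/2}$, and your derivation of the decay from the classical $L^\infty$--$L^2$ interior bound on $e(u)$, make explicit precisely what the paper leaves to those references.
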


 \begin{theorem}\label{teo:ellreg2}
 Let $\gamma\in [0, 1/2]$, $u\in H^1(B_1;\Rn)$ be a local minimiser of $E_{0,\gamma}(\cdot, B_1)$, and $R_0<1$ be such that $\frac{3}{4}R_0 > \gamma$. Then there exists $C'_0>0$, depending only on $\C$, $R_0$,  and $n$,  such that
 \begin{equation*}
 \int \limits_{B_{\varrho}} \C e(u)\colon e(u) \dx \leq C'_0 \, \varrho^n \int \limits_{B_{R_0}} \C e(u) \colon e(u)\dx\,,
 \end{equation*}
 for every $\varrho\leq \frac{3}{4} R_0$.
 \end{theorem}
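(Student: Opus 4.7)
The plan is to adapt the classical interior-boundary regularity scheme for constant-coefficient elliptic systems near a flat Dirichlet portion, in the spirit of \cite[Theorem~4.18(i)]{McLean}. The hypothesis $\tfrac{3}{4}R_0>\gamma$ ensures that the hyperplane $\{x_n=-\gamma\}$ cuts $B_{R_0}$ transversally; on $B_{R_0}\sm H_\gamma$ the minimiser vanishes, and by minimality $u$ satisfies the weak Euler--Lagrange equation
\begin{equation*}
\int_{B_1\cap H_\gamma} \C e(u):e(\phi)\dx = 0 \quad \text{for every } \phi\in H^1(B_1;\Rn) \text{ with } \{\phi\neq 0\}\Subset B_1,\ \phi=0 \text{ in } B_1\sm H_\gamma.
\end{equation*}

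I would first obtain up-to-the-boundary regularity along the flat portion via tangential difference quotients. Fix $i\in\{1,\dots,n-1\}$ and a cutoff $\eta\in C^\infty_c(B_{R_0})$ equal to $1$ on $B_{7R_0/8}$. For $h$ small, the test $\phi=\Delta_i^{-h}(\eta^2\Delta_i^h u)$ is admissible since horizontal translation preserves the vanishing of $u$ below $\{x_n=-\gamma\}$. Plugging $\phi$ into the Euler--Lagrange identity and invoking the coercivity~\eqref{eq:hpC} together with a standard Korn inequality on the half-ball (applicable because $\eta\Delta_i^h u=0$ on $B_{R_0}\sm H_\gamma$), one gets
\begin{equation*}
\int_{B_{7R_0/8}\cap H_\gamma} |\nabla \partial_i u|^2 \dx \leq C\int_{B_{R_0}\cap H_\gamma} |e(u)|^2\dx, \qquad i=1,\dots,n-1.
\end{equation*}

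Next, the pure-normal second derivative $\partial_n^2 u$ is recovered algebraically from the equation $\mathrm{div}\,\C e(u)=0$: the normal-normal principal symbol is invertible thanks to the strong Legendre--Hadamard condition implied by~\eqref{eq:hpC}, so $\partial_n^2 u$ is a bounded linear combination of derivatives $\partial_i\partial_j u$ with at least one tangential index. Iterating the difference-quotient procedure and combining at each step with Poincar\'e's inequality (valid since $u$ vanishes on a relatively open piece of $\partial(B_{R_0}\cap H_\gamma)$) and Korn's inequality yields $H^k$ bounds up to the flat part of the boundary for every $k$, with constants depending only on $\C$, $R_0$, $n$. Choosing $k$ large enough for the Sobolev embedding $H^k\hookrightarrow C^1$, one arrives at
\begin{equation*}
\|e(u)\|^2_{L^\infty(B_{3R_0/4}\cap H_\gamma)} \leq C\int_{B_{R_0}}\C e(u):e(u)\dx,
\end{equation*}
and integrating over $B_\varrho$ with $|B_\varrho|=\omega_n\varrho^n$, using that $e(u)=0$ outside $H_\gamma$ and that $|e(u)|^2$ is controlled by $\C e(u):e(u)$ via~\eqref{eq:hpC}, produces the desired decay.

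The main obstacle is the geometric interaction between the flat Dirichlet portion of $\partial(B_{R_0}\cap H_\gamma)$ and the free curved portion on $\partial B_{R_0}$: the cutoff $\eta$ must vanish before reaching the curved part (so that no boundary compatibility is required there) while still being identically $1$ on a neighbourhood of $\ol{B_{3R_0/4}}$ in order for the bootstrap to deliver the pointwise bound on the target ball. Once this bookkeeping is in place, the constants in Korn's and Poincar\'e's inequalities on the truncated half-ball depend only on $R_0$, uniformly in $\gamma\in[0,1/2]$ (using $\tfrac{3}{4}R_0>\gamma$ to rule out degeneration), consistently with the claimed $C'_0=C'_0(\C,R_0,n)$ dependence.
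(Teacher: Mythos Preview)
Your proposal is correct and follows essentially the same scheme as the paper's proof in the Appendix: tangential difference quotients with a cutoff compactly supported in $B_{R_0}$ yield $H^2$ control, the pure normal derivative $\partial_n^2 u$ is recovered algebraically from $\mathrm{div}\,\C e(u)=0$, iteration gives $H^m$ bounds for all $m$ with constants depending only on $\C$, $R_0$, $n$, and Sobolev embedding into $C^1$ delivers the pointwise bound on $e(u)$ that is then integrated over $B_\varrho$. The only cosmetic difference is that the paper applies Korn and Poincar\'e to functions in $H^1_0(B_{R_0})$ (since the cutoff has compact support) rather than phrasing them as half-ball inequalities, which makes the independence of the constants from $\gamma$ immediate.
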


\section{Approximation of functions with small jump \\ prescribing a value in a subdomain}

In this section we approximate $GSBD^p$ functions with jump set small in $\hn$-measure, by functions keeping the same value near the boundary and smooth in the interior. The different point with respect to \cite[Theorem~3]{ChaConIur17}  is that we also want the approximation to have the same value (here 0) on a subdomain $D$. Then we modify the construction in \cite[Theorem~3]{ChaConIur17} in the interior part, where the original function is regularised, keeping 0 in a neighbourhood of $D$. As in \cite{ChaConIur17} this is done in a cubic domain for simplicity of notation, but holds also for balls (see Remark~\ref{0406181215}). We consider here a bulk energy positively $p$-homogeneous in $e(u)$.

Let $Q_r:=(-r,r)^n$ and $Q:=Q_1=(-1,1)^n$. Moreover, let \[f_0(\xi):=\frac{1}{p}\big(\C \xi \cdot \xi\big)^{p/2} \quad \text{ for $\xi \in \Mnn$}
\]
with $\C$ satisfying \eqref{eq:hpC}, and let $\varrho\in C_c^\infty(B(0,1/6);\R^+)$ be a radially symmetric mollifier, with $\varrho_\delta(x):=\delta^{-n} \varrho(\delta^{-1} x)$ for every $\delta>0$. In the following, for a given subset $U$ of $Q$, we denote $U_\delta:=Q \cap \big( U+(-3\delta, 3\delta)^n \big)$.
\begin{theorem}\label{teo:teor3CCI}
Let $D:= \{ x=(x',x_n)\in Q \colon x_n \leq  g(x')\} \subset Q$, with $g \in C^1(\R^{n-1})$ and $\mathrm{Lip}(g)\leq \frac{1}{8}$ \footnote{later we consider $g_h$ with $\mathrm{Lip}(g_h)$ vanishing}. Then there exist $\eta \in (0,1)$, $C>1$ depending only on $n$, $p$, $\C$, such that for every $u\in GSBD^p(Q)$ with $u=0$ in $D$ and 
\[
\delta:=\hn(J_u)^{1/n} < \eta\,,
\]
there are $R \in (1-\sqrt{\delta},1)$, $\tilde{u}\in GSBD^p(Q)$ with $\tilde{u}=0$ in $D$, and $\tilde{\omega}\subset Q_R$ such that
\begin{itemize}
\item[1.] $\tilde{u}\in C^\infty(Q_{1-\sqrt{\delta}}),\, \tilde{u}=u$ in $Q\sm Q_R$, $\hn(J_u\cap \partial Q_R)=\hn(J_{\tilde{u}}\cap \partial Q_R)=0$;
\item[2.] $\hn(J_{\tilde{u}} \sm J_u)\leq C \sqrt{\delta}\hn(J_u \cap (Q\sm Q_{1-\sqrt{\delta}}))$;
\item[3.] There is $s\in (0,1)$ depending only on $n$ and $p$ such that
\begin{equation*}
\|e(\tilde{u}) - \varrho_\delta \ast e(u)\|_{L^p(Q_{1-\sqrt{\delta}};\Mnn)} \leq C \delta^s \|e(u)\|_{L^p(Q;\Mnn)} + C \|e(u)\|_{L^p(D_\delta;\Mnn)}
\end{equation*}
and
\begin{equation}\label{0506182341}
\int \limits_U f_0(e(\tilde{u})) \dx \leq C \int \limits_{U_\delta} f_0(e(u))\dx + C \delta^s \int \limits_Q f_0(e(u)) \dx  
\end{equation}
for any $U\Subset Q$;
\item[4.] $|\tilde{\omega}| \leq C \delta \big(1+ \hn(J_u \cap Q_R)\big) $ and 
\begin{equation}\label{eq:pr4}
\int \limits_{Q\sm \tilde{\omega}}|\tilde{u}-u|^p \dx \leq C \delta^p \int \limits_Q |e(u)|^p \dx\,;
\end{equation}
\item[5.] If $\psi \in \mathrm{Lip}(Q;[0,1])$, then, for $s\in (0,1)$ as in 3.,
\begin{equation}\label{0506182359}
\int \limits_Q\psi f_0(e(\tilde{u})) \dx \leq C \int \limits_Q \psi f_0(e(u)) \dx + C \delta^s\big(1+\mathrm{Lip}(\psi) \big) \int \limits_Q |e(u)|^p\dx \,;
\end{equation}
\item[6.] If $u\in L^p(\Omega;\Rn)$, then for $U\Subset Q$
\begin{equation*}
\|\tilde{u}\|_{L^p(U;\Rn)}\leq \|u\|_{L^p(U;\Rn)} + C \delta^\frac{1}{2p} \big( \|u\|_{L^p(Q;\Rn)} + \|e(u)\|_{L^p(Q;\Rn)} \big)\,.
\end{equation*}
\end{itemize}
\end{theorem}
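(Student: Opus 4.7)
The plan is to adapt the construction of \cite[Theorem~3]{ChaConIur17}, inserting a cutoff that forces $\tilde{u}$ to vanish on $D$. First, by a standard slicing argument over $r \in (1-\sqrt\delta, 1)$, select a radius $R$ such that $\hn(J_u \cap \partial Q_R)=0$ and the restriction of $u$ to the shell $Q \setminus Q_R$ satisfies the estimates one would expect after averaging; outside $Q_R$ simply set $\tilde{u} := u$, automatically yielding the second half of item~1 and the shell contribution in item~2.

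Inside $Q_R$, tile with cubes $Q_i$ of side of order $\delta$, together with mild enlargements $Q_i' \subset Q_i''$. Applying Proposition~\ref{prop:3.1CCI} on each cube produces an infinitesimal rigid motion $a_i$ and an exceptional set $\omega_i \subset Q_i'$ of Lebesgue measure $\leq c_*\,\delta\,\hn(J_u \cap Q_i'')$ on which $u$ is replaced by $a_i$. Gluing by a smooth partition of unity subordinate to the covering and convolving with $\varrho_\delta$ on $Q_{1-\sqrt\delta}$ produces a $C^\infty$ function that, in the absence of the constraint on $D$, satisfies the analogues of items~1--6 as in \cite[Theorem~3]{ChaConIur17}.

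To enforce $\tilde{u}=0$ on $D$, choose $\chi \in C^\infty(Q;[0,1])$ with $\chi \equiv 0$ on $D$, $\chi \equiv 1$ on $Q \setminus D_\delta$, and $|\nabla \chi|\leq C/\delta$, and multiply the smooth approximation built above by $\chi$. The key observation is that whenever a cube $Q_i'$ intersects $D_\delta$, the set $Q_i'\cap D$ has Lebesgue measure of order $|Q_i'|$ (thanks to $\mathrm{Lip}(g)\leq 1/8$) and $u\equiv 0$ there; combined with \eqref{prop3CCFHolder}, this yields $\|a_i\|_{L^\infty(Q_i')} \leq C \delta\, \|e(u)\|_{L^p(Q_i'')}$, since an affine function that is $L^p$-close to zero on a substantial portion of a cube is itself small. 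Consequently, the multiplication by $\chi$ introduces a perturbation of the symmetric gradient that is concentrated in $D_\delta$ and whose $L^p$-norm is bounded by $C\|e(u)\|_{L^p(D_\delta)}$, producing precisely the extra term in item~3, and by interpolation with $\psi$ also the estimate in item~5.

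The main obstacle is coordinating three competing requirements: global smoothness on $Q_{1-\sqrt\delta}$, exact vanishing on $D$, and the sharp estimates in items~3 and~5. Smoothness across $\partial D$ is obtained by applying the cutoff after mollification. The $C^1$ regularity of $g$ with $\mathrm{Lip}(g)\leq 1/8$ makes $\partial D$ essentially flat at scale $\delta$, so $|D_\delta|$ scales as $\delta$ times a surface area and the perturbation stays localised. The Korn--Poincaré-type inequality \eqref{prop3CCFHolder} is the instrument converting the pointwise assumption $u=0$ on $D$ into the smallness of the affine pieces $a_i$; without it one would not get the sharp dependence on $\|e(u)\|_{L^p(D_\delta)}$. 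Items~1 (smoothness part), 2, 4 and 6 follow from the corresponding steps in \cite[Theorem~3]{ChaConIur17}, since multiplication by $\chi$ does not affect the jump set (item~2), the measure of the bad set $\tilde{\omega}$ (item~4), or the $L^p$ estimate on $\tilde{u}$ (item~6).
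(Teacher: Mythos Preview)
Your key observation is exactly the one the paper uses: for any cube $q_i$ whose enlargement meets $D$, the condition $\mathrm{Lip}(g)\le 1/8$ forces $|(q_i''\setminus\omega_i)\cap D|\gtrsim |q_i''|$, so the Korn--Poincar\'e bound \eqref{prop3CCFHolder} together with $u=0$ on $D$ gives $\|a_i\|_{L^p(q_i'')}\le C\delta_{q_i}\|e(u)\|_{L^p(q_i''')}$ and hence $\|u_i\|_{L^p(q_i')}\le C\delta_{q_i}\|e(u)\|_{L^p(q_i''')}$. (A minor slip: you wrote an $L^\infty$ bound for $a_i$, which is dimensionally off; only the $L^p$ bound is valid, and it is all that is needed.)

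The gap is in how you enforce $\tilde u=0$ on $D$. The construction of \cite{ChaConIur17} does \emph{not} use only cubes of side $\delta$: the layer $Q_R\setminus Q^{i_0+1}$ is covered by \emph{dyadic} cubes of side $\delta 2^{-k}$, $k\to\infty$, accumulating at $\partial Q_R$, and it is precisely this multiscale structure that makes $\widehat u$ match $u$ on $Q\setminus Q_R$ without new jump. A single cutoff $\chi$ at scale $\delta$ is incompatible with this. If you set $\tilde u=\chi\widehat u$ globally, then on $Q\setminus Q_R$ (where $\widehat u=u$) you obtain $\tilde u=\chi u$, which differs from $u$ on $(Q\setminus Q_R)\cap(D_\delta\setminus D)$; item~1 fails and $\{\tilde u\neq u\}$ is no longer compactly contained in $Q$, which is exactly what is needed in the application (Theorem~\ref{thm:Teor4}). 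If instead you keep $\tilde u=u$ on $Q\setminus Q_R$ and $\chi\widehat u$ inside, you create a new jump on $\partial Q_R\cap(D_\delta\setminus D)$ of $\hn$-measure of order $\delta$, which is far larger than the bound $C\sqrt\delta\,\hn(J_u\cap(Q\setminus Q_{1-\sqrt\delta}))\le C\delta^{n+1/2}$ required in item~2.

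The paper avoids this by \emph{omitting} from the partition-of-unity sum the terms with $q_i'\cap D\neq\emptyset$, i.e.\ setting
\[
\tilde u\ =\ \sum_{q_i'\cap D=\emptyset}\varphi_i\,u_i\quad\text{in }Q^{i_0}\setminus\mathcal B,\qquad \tilde u=u\text{ elsewhere,}
\]
and then estimating the correction $v:=\widehat u-\tilde u=\sum_{q_i'\cap D\neq\emptyset}\varphi_i u_i$. Because this correction is itself adapted to the Whitney/dyadic decomposition, the bound $\|u_i\|_{L^p(q_i')}\le C\delta_{q_i}\|e(u)\|_{L^p(q_i''')}$ (with $\delta_{q_i}\to 0$ near $\partial Q_R$) shows that $v$ does not produce new jump and localises the extra error in $e(\tilde u)$ to $D_\delta$, yielding items~1--6. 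Your analysis of item~3 and item~5 would go through verbatim once you replace the fixed-scale cutoff by this term-dropping device.
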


\begin{remark}\label{0406181215}
The properties 1., 2., 6.\  of Theorem~\ref{teo:teor3CCI} are in common  with the corresponding ones of \cite[Theorem~3]{ChaConIur17} (we have stated Theorem~\ref{teo:teor3CCI} similarly to  \cite[Theorem~3]{ChaConIur17} for an easier comparison), while in 3., 4., 5.\ we have a further contribution localised in $D_\delta \sm D$ that introduces the factor $C>1$ in place of 1 in \eqref{0506182341} and \eqref{0506182359}  (notice that $u=0$ in $D$ so we could replace $D_\delta$ by $D_\delta \sm D$ in 3., while in 4.\ we used  $|(\partial D)_\delta| \leq C\delta$): this is due to the fact that we have to correct the modified function $\tilde{u}$ with respect to the one in  \cite[Theorem~3]{ChaConIur17}, in order to guarantee that $\tilde{u}=0$ in $D$.
\end{remark}

\begin{proof}
We follow the notation in \cite[Theorem~3]{ChaConIur17}, but modifying the approximating function $\tilde{u}$, so we recall the first part of the construction therein. In the following $C$ will indicate a generic constant depending on $n$, $p$, $\C$.

Denoting $N:=[1/\delta]$,  so that $({-}N\delta, N\delta)^n \subset Q$,  let for $i=0,\dots, N-1$
\begin{equation*}
Q^i:=\big( {-}(N{-}i)\delta, (N{-}i) \delta \big)^n\,,\qquad C^i:=Q^i\sm Q^{i+1}\,,
\end{equation*}
with  $C^{N-1}=Q^{N-1}$. 
Up to a small translation, one may assume that for every $i=0,\dots, N-1$
\begin{subequations}
\begin{align}
\hn(J_u\cap \partial Q^i)&=0\,,\label{0906180755}\\
\lim_{r\to 0} r^{-n} \int_{B_r(y)} |e(u)-e(u)(y)|^p \dx&=0 \quad  \text{ for $\hn$-a.e.\ $y$ in } \partial Q^i\,.
\end{align}
\end{subequations}
Moreover, one can find (see \cite[(10)]{ChaConIur17}) $i_0 \in \N \cap [1,1/\sqrt{\delta}-3]$ such that, for $\delta$ small enough,
\begin{align*}
\int \limits_{C^{i_0} \cup C^{i_0+1}} |e(u)|^p \dx &\leq 8\sqrt{\delta}\int\limits_{Q\sm Q_{1-\sqrt{\delta}}} |e(u)|^p \dx\,,\\
\hn\big(J_u\cap (C^{i_0} \cup C^{i_0+1})\big)&\leq 8 \sqrt{\delta} \hn\big(J_u \cap (Q\sm Q_{1-\sqrt{\delta}})\big)\,.
\end{align*}
In particular $Q_{1-\sqrt{\delta}}\subset  Q^{i_0+1}$. 

The cube $Q^{i_0+1}$ is divided into cubes $z+(0,\delta)^n$, $z\in \delta \Z^n$, the crown $C^{i_0}$ into dyadic slabs
\begin{equation*}
S_k:=\big( -(N-i_0-2^{-k})\delta, (N-i_0-2^{-k})\delta \big)^n \sm \big( -(N-i_0-2^{-k+1})\delta, (N-i_0-2^{-k+1})\delta \big)^n\,,
\end{equation*}
and each $S_k$ into cubes of the type $z+(0, \delta 2^{-k})^n$, $z\in 2^{-k} \delta \Z^n$. 

The set of all the cubes introduced is called $\mathcal{W}$, and $\mathcal{W}_0$ is the set of cubes covering $Q^{i_0+1}$. For any $q\in \mathcal{W}$, let $q'$, $q''$, $q'''$ be the cubes with same center and dilated by a factor $7/6$, $4/3$, $3/2$, respectively. For
\begin{equation*}
\eta:= \frac{1}{2\cdot 8^n c_*}\,
\end{equation*}
where $c_*$ is the constant in Proposition~\ref{prop:3.1CCI} obtained for $\theta'=8/9$ (the sidelength ratio between $q''$ and $q'''$),
a cube $q\in \mathcal{W}$ of sidelength $\delta_q$ is called \emph{good} if
\begin{equation*}
\hn(q'''\cap J_u)\leq \eta \, (\delta_q)^{n-1}\,,
\end{equation*}
otherwise it is \emph{bad}. By the assumption $\delta<\eta$ and since $\delta^n=\hn(J_u)$, then each $q\in \mathcal{W}_0$ is good.
The union of bad cubes is denoted by $\mathcal{B}$ and (see \cite[(12)]{ChaConIur17})
\begin{align*}
\hn(\partial \mathcal{B}) &\leq \frac{C}{\eta} \sqrt{\delta}\,\hn\big(J_u\cap (Q\sm Q_{1-\sqrt{\delta}})\big)\,,\\
|\mathcal{B}|&\leq \frac{C}{\eta} \delta^{3/2} \,\hn\big(J_u\cap (Q\sm Q_{1-\sqrt{\delta}})\big)\,.
\end{align*}
The good cubes are enumerated into $(q_i)_{i=1}^\infty$, such that $\mathcal{W}_0=\bigcup_{i\leq N_0} q_i$ for $N_0=2^n(N{-}i_0{-}1)^n$. Consider a partition of unity  $(\varphi_i)_{i=1}^\infty$ of $Q^{i_0} \sm \overline{\mathcal{B}}$, in correspondence to $(q_i)_{i=1}^\infty$, with  $\varphi_i=0$ on $Q^{i_0} \sm q'_i \sm \mathcal{B}$.  
Then $\sum_i \varphi_i=1$ on  $Q^{i_0} \sm \overline{\mathcal{B}}$ (a locally finite sum),
\begin{subequations}
\begin{equation}\label{0506181828}
\varphi_i \text{ is } C^\infty \text{ in } Q^{i_0} \sm  \overline{\mathcal{B}}\,,  \qquad  \overline{ \{\varphi_i \neq 0\} } \subset q'_i  \,, 
\end{equation}
and we may ensure that
\begin{equation}\label{0710181303}
|\nabla \varphi_i|\leq \frac{C}{\delta_{q_i}}\,.
\end{equation}
\end{subequations}
Moreover, by Proposition~\ref{prop:3.1CCI} (and Remark~\ref{0406181929}), for any $q_i$ there is a set $\omega_i\subset q''_i$ with 
\[
|\omega_i|\leq c_* \delta_{q_i} \hn(J_u\cap q'''_i)\leq c_* \eta\, (\delta_{q_i})^n\]
and $a_i$ affine with $e(a_i)=0$ such that
\begin{subequations}
\begin{align}
\|u-a_i\|_{L^p(q''_i \sm \omega_i)} &\leq c_* \delta_{q_i} \|e(u)\|_{L^p(q'''_i)} \,, \label{0406181937}\\
\| e(u_i)-e(u)\ast \varrho_{\delta_{q_i}} \|_{L^p(q'_i)} &\leq  c_*  \bigg(\frac{\hn(J_u \cap q'''_i)}{(\delta_{q_i})^{n-1}} \bigg)^{\ol p} \|e(u)\|_{L^p(q'''_i)}\,, \label{0406181943}   
\end{align}
\end{subequations}
where $\ol p=q/p$ ($q$ as in Proposition~\ref{prop:3.1CCI}) and
\begin{equation}\label{0406181944}
u_i:=\varrho_{\delta_{q_i}} \ast (u\chi_{q''_i \sm \omega_i} + a_i \chi_{\omega_i})\,.
\end{equation}
We are now ready to define our approximating function, by
\begin{equation}\label{0406182012}
\tilde{u}:=\begin{cases}
\sum \limits_{q'_i \cap D = \emptyset} \varphi_i \, u_i & \quad \text{in }Q^{i_0}\sm \mathcal{B}\,, \\
\hspace{2em} u &\quad \text{in }\mathcal{B}\cup (Q\sm Q^{i_0})\,.
\end{cases}
\end{equation}
Notice that $\tilde{u}=0$ in $D$, since $\mathrm{supp}(\varphi_i) \cap D \subset q'_i \cap D=\emptyset$ for any $i$ in the sum, and since $u=0$ on $D$. Since  $Q_{1-\sqrt{\delta}}\subset  Q^{i_0+1}$  and  $\tilde{u}$ is smooth in  $Q^{i_0}  \sm \overline{\mathcal{B}}$,  taking $Q_R=Q^{i_0}$ (recall also \eqref{0906180755}) we get  the first two assertions of  property 1  (we see below that $\hn(J_u\cap \partial Q^{i_0})=\hn(J_{\tilde{u}}\cap \partial Q^{i_0})=0$). 
We observe also that the approximating function  in \cite[Theorem~3]{ChaConIur17}, that we denote $\widehat{u}$, is defined as $\sum_i \varphi_i \, u_i$ in $Q^{i_0}\sm \mathcal{B}$ and $u$ elsewhere, so that
\begin{equation}\label{0406182052}
v:=\widehat{u} - \tilde{u} = \sum \limits_{q_i' \cap D \neq \emptyset} \varphi_i \, u_i\,.
\end{equation}
Since  $\varphi_i$ are  smooth  and we see below that $\hn(J_u\cap \partial Q^{i_0})=\hn(J_{\tilde{u}}\cap \partial Q^{i_0})=0$,  property 2. follows directly from the analogue of \cite[Theorem~3]{ChaConIur17}. We have that
\begin{equation}\label{0506181826}
e(v)=\sum \limits_{q_i' \cap D \neq \emptyset} (\varphi_i \, e(u_i) + \nabla \varphi_i \odot u_i )\,.
\end{equation}
We now estimate $\|u_i\|_{L^p(q'_i)}$ for every $q_i$ with $q'_i \cap D \neq \emptyset$. For these cubes, since $\mathrm{Lip}(g)<\frac{1}{8}$, we have that
\begin{equation}\label{0406182108}
\frac{|(q''_i \sm \omega_i) \cap D|}{|q''_i|}\geq d_0 >0\,,
\end{equation}
with $d_0$ a dimensional constant, so independent of $i$. Since $u=0$ in $D$, \eqref{0406181937} gives
\begin{equation*}
\|a_i\|_{L^p((q''_i \sm \omega_i)\cap D)} \leq c_* \delta_{q_i} \|e(u)\|_{L^p(q'''_i)}\,,
\end{equation*}
and since $a_i$ is affine (see e.g.\ \cite[Lemma~3.4]{ChaConIur17}), by \eqref{0406182108} we deduce
\begin{equation}\label{0406182109}
\|a_i\|_{L^p(q''_i)} \leq C \delta_{q_i} \|e(u)\|_{L^p(q'''_i)}\,,
\end{equation}
with $C$ depending on $d_0$ and $c_*$. Moreover, employing the fact that (since $\varrho_{\delta_{q_i}} \ast a_i=a_i$ because $\varrho_{\delta_{q_i}}$ is radial) 
\begin{equation}\label{0506181835}
u_i - a_i= \varrho_{\delta_{q_i}} \ast \big( (u-a_i) \chi_{q''_i \sm \omega_i} \big)\,,
\end{equation} 
we get that
\begin{equation}\label{0406182119}
\int \limits_{q'_i} |u_i - a_i|^p\dx \leq \int \limits_{q''_i \sm \omega_i} |u-a_i|^p \dx \leq c_* (\delta_{q_i})^p \int \limits_{q'''_i} |e(u)|^p \dx\,.
\end{equation}
Notice that this holds for any good cube.
Collecting \eqref{0406182109} and \eqref{0406182119} we get
\begin{equation}\label{0406182120}
\|u_i\|_{L^p(q'_i)} \leq C \delta_{q_i}  \|e(u)\|_{L^p(q'''_i)} \,,
\end{equation}
for every $q_i$ with $q'_i \cap D \neq \emptyset$.
Setting $\widehat{\omega}:= \bigcup_{q'_i \cap D = \emptyset} \omega_i \sm \mathcal{B}$ and $\delta_k := \delta 2^{-k}$, we get that for any $y \in \partial Q^{i_0}$ (recall \eqref{0406182052})
\begin{equation*}
\int\limits_{B_{\delta_k}(y) \sm \widehat{\omega}} \hspace{-1em} |v|^p \dx \leq C \delta^p \, 2^{-kp} \hspace{-1em}\int \limits_{B_{\delta_{k-1}}(y)}\hspace{-1em} |e(u)|^p \dx\,,
\end{equation*}
and $|\widehat{\omega} \cap B_{\delta_k}(y)| \leq C \delta_k (1+1/\eta) \hn(J_u \cap B_{\delta_{k-1}}(y))$. Arguing as in \cite{ChaConIur17}, we deduce then that for $\hn$-a.e.\ $y\in \partial Q^{i_0}$
\begin{equation*}
\frac{1}{\delta_k^n}|\{ |v| > \varepsilon \} \cap B_{\delta_k}(y)| \leq \frac{c_* \hn(J_u \cap B_{\delta_k}(y))}{\delta_k^{n-1}} + C_{\varepsilon, c_*}\, \delta_k \Bigg( \frac{1}{\delta_{k-1}^n}\int\limits_{B_{\delta_{k-1}}(y)} \hspace{-1em} |e(u)|^p \dx \Bigg)^{1/p}\,,
\end{equation*}
and the right hand side vanishes as $\delta_k \to 0$, by \eqref{0906180755}; then $v$ has null trace on $\partial Q^{i_0}$, $\tilde{u} \in GSBD^p(Q)$, as well as $\widehat{u}$, and $\hn(J_{\tilde{u}}\cap \partial Q^{i_0})=\hn(J_{\widehat{u}}\cap \partial Q^{i_0})=\hn(J_u\cap \partial Q^{i_0})=0$ (the properties for $\widehat{u}$ follows from \cite[Theorem~3]{ChaConIur17}). This concludes also properties~1., 2.

 By \eqref{0710181303} and \eqref{0406182120} it follows that
\begin{equation}\label{0506181829}
\|\nabla \varphi_i \odot u_i\|_{L^p(q'_i)} \leq C  \|e(u)\|_{L^p(q'''_i)} \,.
\end{equation}
Moreover, since $\varphi \in [0,1]$, $\varrho_{\delta_{q_i}} \ast a_i =a_i$, $e(a_i)=0$, \eqref{0506181835} and \eqref{0406182119} imply that
\begin{equation*}\label{0506181832}
\begin{split}
\|\varphi_i \, e(u_i)\|_{L^p(q'_i)} &\leq \|e(u_i)\|_{L^p(q'_i)}= \|e(u_i - a_i)\|_{L^p(q'_i)} = \|\varrho_{\delta_{q_i}} \ast \big( (u-a_i) \chi_{q''_i \sm \omega_i} \big)\|_{L^p(q'_i)} \\& \leq \frac{\|\nabla \varrho\|_1}{\delta_{q_i}} \|u-a_i\|_{L^p(q''_i \sm \omega_i)} \leq C  \|e(u)\|_{L^p(q'''_i)} \,.
\end{split}
\end{equation*} 
Putting together the above inequality with \eqref{0506181829} we get
\begin{equation}\label{0506181841}
\|\varphi_i \, e(u_i) + \nabla \varphi_i \odot u_i \|_{L^p(q'_i)} \leq C  \|e(u)\|_{L^p(q'''_i)}
\end{equation}
Therefore, by \eqref{0506181826}, since the $q'''_i$ are finitely overlapping and since, if $q'_i \cap D \neq \emptyset$, then $q'''_i \subset D_\delta$,
\begin{equation}\label{0506181843}
\|e(v)\|_{L^p(U)} \leq C \|e(u)\|_{L^p((D\cap U)_\delta)}\,,
\end{equation}
in particular \[\|e(v)\|_{L^p(Q)} \leq C \|e(u)\|_{L^p(D_\delta)}\,.\]
Since in \cite[Theorem~3, property 3.]{ChaConIur17} it is proven that
\begin{equation*}
\|e(\widehat{u}) - \varrho_\delta \ast e(u)\|_{L^p(Q_{1-\sqrt{\delta}})} \leq C \delta^s \|e(u)\|_{L^p(Q)} 
\end{equation*}
and
\begin{equation*}
\bigg(\int \limits_U f_0(e(\widehat{u})) \dx\bigg)^{\frac{1}{p}} \leq \bigg(\int \limits_{U_\delta} f_0(e(u))\dx\bigg)^{\frac{1}{p}} + C \delta^s \bigg(\int \limits_Q f_0(e(u)) \dx\bigg)^{\frac{1}{p}}\,, 
\end{equation*}
then \eqref{0506181843} gives the first part of property 3.\ and
\begin{equation*}
\bigg(\int \limits_U f_0(e(\tilde{u})) \dx\bigg)^{\frac{1}{p}} \leq \bigg(\int \limits_{U_\delta} f_0(e(u))\dx\bigg)^{\frac{1}{p}} + C \delta^s \bigg(\int \limits_Q f_0(e(u)) \dx\bigg)^{\frac{1}{p}} + C \bigg( \int \limits_{(D\cap U)_\delta} f_0(e(u)) \dx \bigg)^{\frac{1}{p}}\,,
\end{equation*}
from which also \eqref{0506182341} follows, using $(a+b)^p \leq 2^{p-1} (a^p+b^p)$.

As for 4., this follows from the fact that $\mathrm{supp }\, v \subset (\partial D)_\delta$, whose volume is less than $C \delta$, and since property 4.\ of 
\cite[Theorem~3]{ChaConIur17} gives \eqref{eq:pr4} for $\widehat{u}$ for a set $\widehat{\omega}$ with $|\widehat{\omega}| \leq C \delta  \hn(J_u \cap Q_R)$.

Let us now consider property 5., so fix $\psi \in \mathrm{Lip}(Q; [0,1])$: we have
\begin{equation*}
\int \limits_Q \psi \, f_0(e(\tilde{u}))\dx = \int \limits_Q \int_0^{\psi(x)} f_0(e(\tilde{u})) \, \mathrm{d}t \dx = \int_0^1 \int\limits_{\{ x \colon t < \psi(x)\} } f_0(e(\tilde{u})) \dx \, \mathrm{d}t\,.
\end{equation*}
Taking $U=\{ x \colon t <\psi(x) \}$ in \eqref{0506182341} (notice that $U_\delta \subset \{x \colon t <\psi(x) + c_\psi \delta \}$, where $c_\psi=3n^{1/2}\mathrm{Lip}(\psi)$) we get
\begin{equation}
\begin{split}
\int \limits_Q \psi \, f_0(e(\tilde{u}))\dx &\leq \int_0^1 C \,\Big( \hspace{-1.5em} \int \limits_{ \{x \colon t <\psi(x) + c_\psi \delta \} } \hspace{-1.5em}  f_0(e(u)) \dx +  \delta^s \int \limits_Q f_0(e(u)) \dx \Big) \mathrm{d}t \\& = C \int \limits_Q  (\psi(x) + c_\psi \delta) f_0(e(u)) \dx + C \delta^s \int \limits_Q f_0(e(u)) \dx\,,
\end{split}
\end{equation}
which implies \eqref{0506182359}.

Property 6.\ follows since it holds for $\widehat{u}$ in place of $\tilde{u}$, and by \eqref{0406182052}, \eqref{0406182120} we have $\|v\|_{L^p(Q)} \leq C \delta \|e(u)\|_{L^p(Q)}$.
\end{proof}
\begin{remark}\label{rem:0906180843}
Theorem~\ref{teo:teor3CCI} and \cite[Theorem~3]{ChaConIur17} hold for any cube $Q_t$ in place of $Q$, arguing in the same way, and also for any other regular open sets, as balls, employing a Whitney-type argument (see also the comments at the beginning of Subsection~3.2 in \cite{ChaConIur17}).
\end{remark}
\section{Minimising sequences for the Griffith energy with Dirichlet condition\\ and vanishing jump}
For every $D\subset Q$ Borel set, $u\in GSBD^p(Q)$, $c>0$,  and $A\subset Q$ open set, we define
\begin{equation}\label{0606181912}
G_D(u, c, A):=\begin{dcases}
\int\limits_A f_0(e(u)) \dx + c \hn(J_u \cap A) \quad &\text{if } u=0 \text{ a.e.\ in }D\,, \\
+\infty & \text{otherwise.}
\end{dcases}
\end{equation}
Let us also set
\begin{equation*}\label{0606181929}
\m_D(u, c, A):= \inf \{ G_D(v, c, A) \colon v\in GSBD^p(Q), \{v \neq u\} \Subset A \}\,,
\end{equation*}
as the local minimum with respect to perturbations in $A$, and the deviation from minimality on $A$ given by (for $\m_D(u,c, A) <\infty$)
\begin{equation*}
\mathrm{Dev}_D(u, c, A):= G_D(u, c, A) - \m_D(u, c, A)\,.
\end{equation*}
The following theorem, which is the goal of this section, proves the convergence of quasi-minimisers for $G_{D_h}$ with vanishing jump measure toward a minimiser of the bulk energy with respect to its own boundary value. It is a Dirichlet counterpart of \cite[Proposition~3.4]{CFI17DCL}, \cite[Theorem~4]{ChaConIur17}. 
\begin{theorem}\label{thm:Teor4}
Let 
$D_h:=\{x=(x',x_n) \in Q \colon x_n \leq g_h(x')\}$, for $g_h \colon \R^{n-1} \to \R$ continuous and converging locally uniformly to the constant function $-\gamma$, with $\gamma\in [0,1)$.  Let $v_h \in GSBD^p(Q)$, $c_h >0$ be such that
\begin{equation}\label{0606182345}
\begin{split}
\sup_{h\in \N} G_{D_h}(v_h, c_h, Q) &< \infty\,,\\
\lim_{h\to \infty} \mathrm{Dev}_{D_h}(v_h, c_h, Q) &= \lim_{h\to \infty} \hn(J_{v_h})=0\,. 
\end{split}
\end{equation}
Then there exists $v\in W^{1,p}(Q; \Rn)$ with $v=0$ in $Q \sm H_\gamma$, such that, up to a subsequence $h_j$, $v_{h_j}\to v$ a.e.\ in  $Q$ with
\begin{subequations}
\begin{equation}\label{0606182049}
\int \limits_Q f_0(e(v)) \dx \leq \int \limits_Q f_0(e(w))  \dx \quad \text{ for any } w\in W^{1,p}(Q;\Rn), \{w \neq v\}\Subset Q, \, w=0 \text{ in } Q\sm H_\gamma\,,
\end{equation}
and, for any $t\in (0,1)$,
\begin{align}
\lim_{j\to \infty} G_{D_{h_j}}(v_{h_j}, c_{h_j}, Q_t) &= \int \limits_{Q_t} f_0(e(v)) \dx \,; \label{0606182048}\\
e(v_{h_j}) \to e(v) \text{ in }L^p(Q_t;\Mnn), & \quad c_{h_j} \hn(J_{v_{h_j}} \cap Q_t)\to 0  \,. \label{0606182050}
\end{align}
\end{subequations}
\end{theorem}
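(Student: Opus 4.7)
\emph{Step 1: compactness and identification of $v$.} The strategy adapts \cite[Theorem~4]{ChaConIur17} to the Dirichlet setting, the main new ingredient being Theorem~\ref{teo:teor3CCI}, which preserves the constraint $\tilde v_h = 0$ on $D_h$. For $h$ large, so that $\delta_h := \hn(J_{v_h})^{1/n} < \eta$, Theorem~\ref{teo:teor3CCI} applied to $v_h$ produces $\tilde v_h \in GSBD^p(Q)$, smooth on $Q_{1-\sqrt{\delta_h}}$, equal to $v_h$ outside some $Q_{R_h}$ with $R_h \nearrow 1$, and vanishing on $D_h$. Since $\gamma < 1$ and $g_h \to -\gamma$ locally uniformly, a fixed Lipschitz subdomain $\tilde D \subset Q$ of positive measure is contained in $D_h$ for $h$ large; the Korn--Poincaré inequality for $W^{1,p}$ maps vanishing on $\tilde D$ then bounds $\|\tilde v_h\|_{W^{1,p}(Q_t;\R^n)}$ by $C(t)\|e(\tilde v_h)\|_{L^p(Q_t)}$ uniformly for each $t < 1$, and property~5 of Theorem~\ref{teo:teor3CCI} together with $\sup_h G_{D_h}(v_h,c_h,Q) < \infty$ controls the right-hand side uniformly in $h$. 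A diagonal extraction as $t \nearrow 1$ gives $\tilde v_h \rightharpoonup v$ in $W^{1,p}_{\mathrm{loc}}(Q;\R^n)$ and strongly in $L^p_{\mathrm{loc}}$. Property~4 of Theorem~\ref{teo:teor3CCI} combined with Proposition~\ref{prop:3.1CCI} (using $0$ as the reference affine via the Dirichlet datum) then gives $v_h \to v$ a.e.\ on $Q$ along a further subsequence. The uniform convergence $g_h \to -\gamma$ forces $v \equiv 0$ on $Q\setminus H_\gamma$; lower semicontinuity yields $e(v) \in L^p(Q;\Mnn)$, and global Korn with zero trace on the positive-measure set $Q\setminus H_\gamma$ gives $v \in W^{1,p}(Q;\R^n)$.

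\emph{Step 2: admissible gluing competitors.} For a test $w$ as in \eqref{0606182049}, a density argument (vertical translation in $x_n$) reduces to the case where $w \equiv 0$ on an open neighborhood of $Q\setminus H_\gamma$, which eventually contains $D_h$, so that $w = 0$ on $D_h$. Fix $\tau_0 \in (\gamma,1)$ with $\{w \neq v\} \Subset Q_{\tau_0}$, $R' \in (\tau_0, R_h)$, and $\phi \in C_c^\infty(Q_{R'})$ with $\phi \equiv 1$ on $Q_{\tau_0}$, and set $z_h := \phi\, w + (1-\phi)\, \tilde v_h$. Then $z_h$ is smooth on $Q_{R_h}$, equals $v_h$ on $Q\setminus Q_{R_h}$, vanishes on $D_h$, and has $J_{z_h}\subset J_{v_h}\cap(Q\setminus Q_{R_h})$, so it is an admissible perturbation of $v_h$ for the local minimality problem. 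Plugging $z_h$ into the quasi-minimality $G_{D_h}(v_h,c_h,Q) \leq G_{D_h}(z_h,c_h,Q) + \mathrm{Dev}_{D_h}(v_h,c_h,Q)$ and cancelling the common contribution in $Q\setminus Q_{R_h}$ produces, with $A := Q_{R'}\setminus Q_{\tau_0}$, the estimate $\int_{Q_{R_h}} f_0(e(v_h)) + c_h\hn(J_{v_h}\cap Q_{R_h}) \leq \int_{Q_{\tau_0}} f_0(e(w)) + \int_A f_0(e(z_h)) + \int_{Q_{R_h}\setminus Q_{R'}} f_0(e(\tilde v_h)) + o(1)$.

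\emph{Step 3: passage to the limit.} On $A$, $w = v$ and $\tilde v_h \to v$ strongly in $L^p(A)$; convexity of $f_0$ absorbs the convex combination part ($f_0(\phi\, e(w) + (1-\phi)\, e(\tilde v_h)) \leq \phi\, f_0(e(w)) + (1-\phi)\, f_0(e(\tilde v_h))$), while the $|\nabla\phi \odot (w-\tilde v_h)|^p$ residue is handled via $|X+Y|^p \leq (1+\epsilon_h)|X|^p + C_{\epsilon_h}|Y|^p$ with $\epsilon_h \to 0$ chosen by a diagonal procedure so that $C_{\epsilon_h}\|\tilde v_h - v\|_{L^p(A)}^p \to 0$. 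On $Q_{R_h}\setminus Q_{R'}$, properties~3 and~5 of Theorem~\ref{teo:teor3CCI} bound $\int f_0(e(\tilde v_h))$ by (up to $\delta_h^s$-small errors) the corresponding integral of $f_0(e(v_h))$. Taking $\liminf$ on the left (via lower semicontinuity with $e(v_h)\rightharpoonup e(v)$ in $L^p_{\mathrm{loc}}$) and $\limsup$ on the right, and letting $R' \nearrow 1$, gives $\int_Q f_0(e(v)) \leq \int_Q f_0(e(w))$, which is \eqref{0606182049}. Specializing to $w = v$ matches $\liminf$ and $\limsup$ of the bulk energies on each $Q_t$, yielding \eqref{0606182048}, strong convergence \eqref{0606182050} by strict convexity of $f_0$, and $c_h\hn(J_{v_h}\cap Q_t) \to 0$ from $\mathrm{Dev}_{D_h} \to 0$.

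The main obstacle I anticipate is the multiplicative constant $C > 1$ in Theorem~\ref{teo:teor3CCI}(3),(5) (absent in \cite[Theorem~3]{ChaConIur17}, and due to the Dirichlet correction). It is controlled by the density reduction making $w$ vanish on a neighborhood of $Q\setminus H_\gamma$, which confines the $C$-contaminated contributions to cubes touching $D_h$, where $e(v_h) \equiv 0$ (since $v_h = 0$ on $D_h$), with the overflow into $D_{h,\delta_h}\setminus D_h$ negligible by equi-integrability of $e(v_h)$ in $L^p$.
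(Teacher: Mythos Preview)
Your overall architecture is right and close to the paper's, but there is a genuine gap in how you dispose of the constant $C>1$ coming from Theorem~\ref{teo:teor3CCI}. Your proposed remedy, ``equi-integrability of $e(v_h)$ in $L^p$'', is not available: you only know $\sup_h\|e(v_h)\|_{L^p(Q)}<\infty$, which does \emph{not} prevent $|e(v_h)|^p$ from concentrating on the thin strips $(D_h)_{\delta_h}\setminus D_h$ or on the annuli $Q_{R_h}\setminus Q_{R'}$. Strong $L^p$-convergence of $e(v_h)$ is a \emph{conclusion} of the theorem (namely \eqref{0606182050}), not an input you may invoke along the way. Concretely, after your Step~3 you are left with an inequality of the shape
\[
\int_{Q_{R'}} f_0(e(v_h)) \dx \ \leq\ \int_{Q_{\tau_0}} f_0(e(w)) \dx \ +\ (C{-}1)\int_{(\text{thin annulus})} f_0(e(v_h)) \dx \ +\ o(1)\,,
\]
and nothing in your argument forces the annulus term to vanish uniformly in $h$ as $R'\nearrow 1$. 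The same lacuna blocks your route to \eqref{0606182048}: matching $\liminf$ and $\limsup$ on each $Q_t$ requires knowing that the limit energy exists on \emph{every} sub-cube, which you have not established.

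The paper closes this gap by an idea you omit. Before anything else it applies Helly's theorem to the monotone maps $t\mapsto G_{D_h}(v_h,c_h,Q_t)$, extracting a subsequence with $G_{D_h}(v_h,c_h,Q_t)\to\Lambda(t)$ for \emph{every} $t\in[0,1]$. One then works at a continuity point $t$ of $\Lambda$, picks $t'<t$ with $\Lambda(t)-\Lambda(t')<\varepsilon$, and places the cut-off so that $\{0<\psi<1\}\subset Q_t\setminus\overline{Q_{t'}}$. The term carrying the factor $C$ in property~5 is then bounded by $C\big(\Lambda(t)-\Lambda(t')\big)<C\varepsilon$, and one sends $\varepsilon\to 0$ at the end. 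This same device yields $\Lambda(t)=\int_{Q_t}f_0(e(v))$ for all $t$ (hence \eqref{0606182048}), from which \eqref{0606182050} follows. Two smaller points: the paper uses \cite[Theorem~3]{ChaConIur17} (constant $1$) for the compactness step and reserves Theorem~\ref{teo:teor3CCI} for building the competitor $\tilde w_h$; and it produces the Dirichlet-compatible approximants $w_h$ of $w$ via \cite[Lemma~6.3]{BabGia14} rather than a vertical translation, which in your version needs extra care to keep $\{w_\varepsilon\neq v\}\Subset Q$.
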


\begin{proof}
Being $t \mapsto G_{D_h}(v_h, c_h, Q_t)$ nondecreasing in $[0,1]$, by Helly's theorem we have that up to  a subsequence (not relabelled) independent of $t$
\begin{equation}\label{0606182310}
\lim_{h\to \infty} G_{D_h}(v_h, c_h, Q_t)=: \Lambda(t) < \infty
\end{equation}
for every $t \in [0,1]$, and $\Lambda$ nondecreasing. By \cite[Proposition~2]{CCF16} there exist $\omega_h$ with $|\omega_h|\leq c \hn(J_{v_h})$ and $a_h$ affine with $e(a_h)=0$ such that
\begin{equation}\label{0606182318}
\int \limits_{Q\sm \omega_h} |v_h - a_h|^p \dx \leq C \int \limits_Q |e(v_h)|^p \dx\,.
\end{equation}
On the other hand we have that, since $g_h$ converge to $-\gamma > -1$ locally uniformly, then $|D_h| \geq d_0 >0$, which  by \cite[Lemma~3.4]{ChaConIur17}  (notice that $a_h$ are affine, $v_h=0$ in $D_h$, and $|\omega_h|\to 0$) gives 
\begin{equation}\label{0606182319}
\int \limits_Q |a_h|^p \leq C \int \limits_Q |e(v_h)|^p \dx\,,
\end{equation}
so that one can choose $a_h=0$ in \eqref{0606182318}. 
Being $e(v_h)$ bounded in $L^p$ we have that
\begin{equation}\label{0606182321}
v_h \,\chi_{Q\sm \omega_h} \weak v \quad\text{in }L^p(Q;\Rn)\,, 
\end{equation}
for a suitable $v\in L^p(Q;\Rn)$. Since $g_h$ converge to $-\gamma$ and $|\omega_h| \to 0$, by \eqref{0606182345}, we deduce that $v=0$ in $Q\sm H_\gamma$ and that $v_h$ converge pointwise to $v$ in $Q$, again up to a subsequence.  

Let us fix $\varepsilon>0$ and a point $t\in (0,1]$ of left continuity of $\Lambda$, so that we can find $t' \in (0,t)$ with 
\begin{equation}\label{0706180922}
\Lambda(t)-\Lambda(t') < \varepsilon\,,
\end{equation}
 and apply \cite[Theorem~3]{ChaConIur17} for $Q=Q_t$, $u=v_h$ (so $\delta_h= \hn(J_{v_h}\cap Q_t)^{1/n}$), which gives functions $\widehat{v}_h$ and exceptional sets $\widehat{\omega}_h$  (for their properties see also Theorem~\ref{teo:teor3CCI} and Remark~\ref{0406181215}):  in particular we have that $(\widehat{v}_h - v_h) \chi_{Q_t \sm \widehat{\omega}_h} \to 0$ in $L^p$ and $|\widehat{\omega}_h| \to 0$ as $h \to 0$, by \cite[Theorem~3, Property~4]{ChaConIur17}.   
  Moreover, the functions $\widehat{v}_h$ are $W^{1,p}(Q_s; \Rn)$ for any $s <t$ and $h$ large enough, and $e(\widehat{v}_h)$ is equibounded in $L^p$, by \cite[Theorem~3, Property~3]{ChaConIur17}: then Korn's inequality implies that $\widehat{v}_h - \widehat{a}_h$ converges weakly in $W^{1,p}_{\mathrm{loc}}(Q_t)$, for suitable affine $\widehat{a}_h$; actually this convergence holds true also with $\widehat{a}_h=0$, arguing as done for \eqref{0606182319}. 
 Recalling \eqref{0606182321}, we can say that  for any $\varphi \in C^\infty_c(Q_t)$
 \begin{equation*}
 \int\limits_{Q_t} \widehat{v}_h \cdot \varphi \dx  = \hspace{-1.5em} \int \limits_{Q_t \cap (\omega_h \cup \, \widehat{\omega}_h)} \hspace{-1.5em} \widehat{v}_h \cdot \varphi \dx + \int \limits_Q (v_h \, \chi_{Q \sm \omega_h}) \cdot (\varphi \, \chi_{Q_t \sm \widehat{\omega}_h}) \dx + \int \limits_{Q_t} \big( (\widehat{v}_h - v_h) \chi_{Q_t \sm \widehat{\omega}_h}  \big) \cdot \varphi\, \chi_{Q_t \sm \omega_h} \dx
 \end{equation*}
 converges to $\int \limits_Q v \cdot \varphi \dx$. We then deduce that
 
\begin{equation}\label{0606182334}
\widehat{v}_h \weak v \quad\text{in }W^{1,p}_{\mathrm{loc}}(Q_t)\,,
\end{equation}
that  (recall \cite[Theorem~3, Property~3]{ChaConIur17}) 
\begin{equation}\label{0606182336}
e(v_h) \weak e(v) \quad\text{in }L^p(Q_t;\Mnn)\,,
\end{equation}
and then that
\begin{equation}\label{0606182337}
\int \limits_{Q_t} f_0(e(v)) \dx \leq \liminf_{h\to \infty} \int \limits_{Q_t} f_0(e(v_h)) \dx \,.
\end{equation}
In particular, since $\int \limits_{Q_t} f_0(e(v_h)) \dx \leq G_{D_h}(v_h, c_h, Q_t)$, it follows that
\begin{equation}\label{0606182348}
\int \limits_{Q_t} f_0(e(v)) \dx \leq \Lambda(t) \,.
\end{equation}
We study now the local minimality of the limit function $v$, in the sense of \eqref{0606182049}, employing the quasi-local minimality of $v_h$ for $G_{D_h}(\cdot, c_h, Q)$. Then let us fix 
a test function for \eqref{0606182049}, that is 
$w\in W^{1,p}(Q;\Rn)$ with $\{w \neq v\} \Subset Q_t$ and $w=0$ a.e.\ in $Q \sm H_\gamma$. By \cite[Lemma~6.3]{BabGia14}, there exist $w_h\in W^{1,p}(Q;\Rn)$, with $w_h=0$ a.e.\ in $D_h$ and 
\begin{equation}\label{0706180845}
w_h \to w \quad\text{in }W^{1,p}(Q;\Rn)\,.
\end{equation} 
Since $\{w \neq v\} \Subset Q_t$, there is a $\widehat{t} \in (t', t)$ such that $Q_t \sm Q_{\widehat{t}} \subset \{w=v\}$.   Let $\psi \in C_c(Q_t)$ be a Lipschitz function  with $\psi=1$ in $\ol Q_{t'}$  and 
\begin{equation}\label{0906180850}
\{ 0 < \psi < 1\} \subset Q_{t''} \sm Q_{\widehat{t}} \subset \{w=v\} \cap Q_{t''} \sm   \ol Q_{t'} \,, \quad\text{for some }t''\in (t',t)\,.
\end{equation} 
Let us apply Theorem~\ref{teo:teor3CCI} for $Q=Q_t$, $D=D_h$, $u=v_h$, to get functions $\tilde{v}_h$ and exceptional sets $\tilde{\omega}_h$.
Notice that $\tilde{v}_h \in W^{1,p}(Q_{t''};\Rn)$ for $h$ large (so $\delta_h$ small, and $\tilde{v}_h\in C^\infty(Q_{t-\sqrt{\delta_h}})$) and that $e(\tilde{v}_h)$ is bounded in $Q_s$, for every $s<t$, by \eqref{0506182341}. Thus  we deduce, arguing as done before for the $\widehat{v}_h$, that 
\begin{equation}\label{0706180054}
\tilde{v}_h \weak v \quad\text{ in } W^{1,p}(Q_s;\Rn)\,,\text{ for }s<t\,,
\end{equation} 
by \eqref{0606182336} and Property 4.\ in Theorem~\ref{teo:teor3CCI} (observe that $|\tilde{\omega}_h|\leq C \delta_h \to 0$, and recall Korn's inequality).
We set
\begin{equation}\label{0706180017}
\tilde{w}_h:= \tilde{v}_h (1-\psi) + \psi \,w_h\,.
\end{equation}
Since $\tilde{v}_h=v_h$ in $Q\sm Q_{R_h}$, so $\{\tilde{v}_h \neq v_h\} \Subset Q_t$, and $\tilde{v}_h=w_h=0$ in $D_h$, we get
\begin{equation*}
\{\tilde{w}_h \neq v_h\}\Subset Q_t\,,\qquad \qquad \tilde{w}_h=0 \text{ in }D_h\,. 
\end{equation*} 
Therefore  $G_{D_h}(\tilde{w}_h, c_h, Q_t) < \infty$ and, by \eqref{0606182345}, 
\begin{equation}\label{0706180030}
\int \limits_{Q_t} f_0(e(v_h)) \dx + c_h \hn(J_{v_h} \cap Q_t) \leq \int \limits_{Q_t} f_0(e(\tilde{w}_h)) \dx + c_h \hn(J_{\tilde{w}_h} \cap Q_t) + o(1)\,,
\end{equation}
where $o(1)= \mathrm{Dev}_{D_h}(v_h, c_h, Q_t) \to 0$. By Properties 1.\ and 2.\ of Theorem~\ref{teo:teor3CCI} we have $J_{\tilde{w}_h} \subset J_{\tilde{v}_h} \subset Q_t \sm Q_{t-\sqrt{\delta_h}}$ and $\hn(J_{\tilde{v}_h}\sm J_{v_h})\leq C \sqrt{\delta_h}\hn(J_{v_h})$. This implies, subtracting $c_h \, \hn(J_{v_h} \cap Q_t \sm Q_{t-\sqrt{\delta_h}} )$ from both sides of \eqref{0706180030}, that
\begin{equation}\label{0706180041}
\int \limits_{Q_t} f_0(e(v_h)) \dx + c_h \hn(J_{v_h} \cap Q_{t-\sqrt{\delta_h}}) \leq \int \limits_{Q_t} f_0(e(\tilde{w}_h)) \dx+ o(1)\,.
\end{equation}
By \eqref{0706180017}
\begin{equation}\label{0706180841}
e(\tilde{w}_h)= (1-\psi) e(\tilde{v}_h) + \psi e(w_h) + \nabla \psi \odot (\tilde{v}_h-w_h)\,.
\end{equation}
In view of \eqref{0706180845}, \eqref{0906180850}, and \eqref{0706180054} we get
\begin{equation*}
\tilde{v}_h-w_h \to 0 \quad\text{in }L^p(\{0<\psi <1\};\Rn)\,,
\end{equation*}
and then, employing the convexity of $f_0$, 
\begin{equation}\label{0706180900}
\int \limits_{Q_t} f_0(e(\tilde{w}_h)) \dx \leq (1+o(1)) \bigg[  \int \limits_{Q_t} (1-\psi) f_0(e(\tilde{v}_h)) \dx + \int \limits_{Q_t} \psi f_0(e(w_h)) \dx \bigg] + o(1)\,.
\end{equation}
By Property~5.\ in Theorem~\ref{teo:teor3CCI}
\begin{equation}\label{0706180901}
\int \limits_{Q_t} (1-\psi) f_0(e(\tilde{v}_h)) \dx \leq C \int \limits_{Q_t} (1-\psi) f_0(e(v_h)) \dx + o(1)\,,
\end{equation}
so that, combining \eqref{0706180900}, \eqref{0706180901} with \eqref{0706180041},
\begin{equation}\label{0706180902}
\begin{split}
\int \limits_{Q_t} f_0(e(v_h)) \dx + c_h \hn(J_{v_h} \cap Q_{t-\sqrt{\delta_h}})  &\leq    \int \limits_{Q_t} \psi f_0(e(w_h)) \dx + o(1) \\& \hspace{1em}+ C \int \limits_{Q_t} (1-\psi) f_0(e(v_h)) \dx  \,.
\end{split}
\end{equation}
We now pass to the limit the above inequality employing \eqref{0606182336} and \eqref{0706180845}  respectively  in the left and in right hand side, obtaining
\begin{equation}\label{0706180933}
\int \limits_{Q_t}  f_0(e(v)) \dx \leq    \int \limits_{Q_t}  f_0(e(w)) \dx + C \limsup_{h\to \infty} \int \limits_{Q_t} (1-\psi) f_0(e(v_h)) \dx \,.
\end{equation}
Notice that, being $1-\psi=0$ in  $\ol Q_{t'}$  and $1-\psi \leq 1$ in $Q_t$,
\[
\int \limits_{Q_t} (1-\psi) f_0(e(v_h)) \dx \leq \int \limits_{Q_t \sm  \ol Q_{t'}} f_0(e(v_h)) \dx\,.
\]
Now, since 
\begin{equation*}
\begin{split}
\int \limits_{Q_t \sm Q_{t'}} f_0(e(v_h)) \dx&= G_{D_h}(v_h, c_h, Q_t)- G_{D_h}(v_h, c_h, Q_{t'}) - c_h \hn(J_{v_h} \cap Q_t \sm Q_{t'}) \\& \leq G_{D_h}(v_h, c_h, Q_t)- G_{D_h}(v_h, c_h, Q_{t'})\,,
\end{split}
\end{equation*}
we have that (recall \eqref{0706180922})
\begin{equation*}
\limsup_{h\to \infty} \int \limits_{Q_t \sm Q_{t'}} f_0(e(v_h)) \dx \leq \lim_{h\to \infty} \Big[ G_{D_h}(v_h, c_h, Q_t)- G_{D_h}(v_h, c_h, Q_{t'}) \Big]= \Lambda(t) -\Lambda(t') < \varepsilon\,.
\end{equation*}
Therefore from \eqref{0706180933} we deduce
\begin{equation}\label{0706180945}
\int \limits_{Q_t}  f_0(e(v)) \dx \leq    \int \limits_{Q_t} f_0(e(w)) \dx +C\,\varepsilon \,,
\end{equation}
and then \eqref{0606182049} follows by the arbitrariness of $\varepsilon$ and of the test function $w$.

 Moreover, we have that for $h$ large the left hand side of \eqref{0706180902} is greater than $G_{D_h}(v_h, c_h, Q_{t'})$, so that    
\begin{equation*}
 \Lambda(t) - \varepsilon < \Lambda(t')  \leq \int \limits_{Q_t} f_0(e(w)) \dx + C \varepsilon\,, 
\end{equation*}
for any $w$ test function for \eqref{0606182049}, and then
\begin{equation*}
\Lambda(t)\leq \int \limits_{Q_t} f_0(e(w)) \dx\,, 
\end{equation*}
since $\varepsilon$ is arbitrary. Taking $w=v$ and recalling \eqref{0606182348} we get
\begin{equation*}
\int \limits_{Q_t} f_0(e(v))\dx=\Lambda(t)
\end{equation*}
for every $t \in (0,1]$ point of left continuity of $\Lambda$. Since $t\mapsto \int \limits_{Q_t} f_0(e(v))$ is continuous, then it coincides for every $t$ with $\Lambda$ (that then is continuous too). By the definition \eqref{0606182310} of $\Lambda$ we conclude \eqref{0606182048}. At this stage, \eqref{0606182050} follows immediately from \eqref{0606182336} (that holds for every $t$) and \eqref{0606182048}. This completes the proof.
\end{proof}

\begin{remark}\label{rem:1606180911}
Employing the versions for balls of Theorem~\ref{teo:teor3CCI} and \cite[Theorem~3]{ChaConIur17} (see Remark~\ref{rem:0906180843}), we have that Theorem~\ref{thm:Teor4} holds also for balls $B_r$, $r>0$, in place of $Q$. In this version we apply it in the following section.
\end{remark}

\begin{remark}\label{rem:1206181208}
In Theorem~\ref{thm:Teor4}, if $p=2$ then \eqref{0606182049} corresponds to say that $v$ is a local minimiser of $E_{0,\gamma}(\cdot, Q)$ in the sense of Definition~\ref{def:locmin}.
\end{remark}
\section{Strong minimisers for the Griffith energy with Dirichlet condition}\label{Sec:strmin}
We assume, as in the Introduction, that $\Omega' \supset \Omega$ with $\Omega' \cap \dom=\dod$, $\mathrm{diam\,}\Omega'\leq 2\, \mathrm{diam\,}\Omega$, and introduce the following functional, defined for every open set $A\subset \Omega'$. Differently from the functional $G_D$ in \eqref{0606181912}, we consider the classical Griffith energy, so with the quadratic \emph{linearised elastic energy} as bulk energy.
 We then set for every $u\in GSBD^2(\Omega)$ and $A\subset \Omega'$
\begin{equation}\label{1006180918}
G_0(u, A):=\begin{dcases}
\int \limits_A \C e(u) \colon e(u) \dx + 2\beta \,\hn(J_u \cap A) &\quad\text{if }u=0 \text{ a.e.\ in } A \sm (\Omega\cup \dod)\,,\\
+\infty & \quad \text{otherwise.}
\end{dcases}
\end{equation}
Let also
\begin{equation}\label{1006180922}
\m(u,A):=\inf\{G_0(u,A) \colon v\in GSBD^2(A),\, \{v\neq u\} \Subset A\}
\end{equation}
be the local minimum value, and, if $\m(u,A)<+\infty$,
\begin{equation*}\label{1006180924}
\mathrm{Dev\,}(u,A):= G_0(u,A) - \m(u,A)
\end{equation*}
be the local deviation from minimality.
We state Theorem~\ref{teo:Teor3.4BG} for quasi-minimisers, which are defined as follows.
\begin{definition}\label{def:quasiMin}
A function $u\in GSBD^2(A)$ is a $(\omega,s)$-\emph{quasi-minimiser} of $G_0(\cdot,A)$ if there exist $\omega>0$ and $s\in (0,1)$ such that for every ball $B_\varrho(x) \subset A$  with $\varrho\leq 1$ 
\begin{equation*}
\mathrm{Dev\,}(u, B_\varrho(x)) \leq \omega \varrho^{n-1+s}\,.
\end{equation*}
\end{definition}

We are here interested in the Dirichlet minimisation problem
\begin{equation}\label{1006181103}
\min_{u\in GSBD^2(\Omega')} \left \{ \int \limits_{\Omega'} \C e(u) \colon e(u) \dx + 2\beta\, \hn(J_u \sm K) \colon u=u_0 \text{ in } \Omega' \sm (\Omega\cup \dod) \right\}\,,
\end{equation}
where $K\subset \Omega\cup \dod$ is closed in the relative topology.
In order to deal with the set $K$, we consider the following localised version of \eqref{1006181103}, still with Dirichlet boundary condition
\begin{equation}\label{1006181107}
\min_{u\in GSBD^2(A)} \left \{ \int \limits_{A} \C e(u) \colon e(u) \dx + 2\beta\, \hn(J_u \cap A) \colon u=u_0 \text{ in } A \sm (\Omega\cup \dod) \right\}\,,
\end{equation}
for every $A \subset \Omega'$.
The following proposition shows that there is a correspondence between solutions to \eqref{1006181107} and quasi-minimisers of $G_0(\cdot,A)$, for which the boundary condition is 0.  For the moment we do not assume that $\dod$ is of class $C^1$. 
\begin{proposition}\label{prop:1006181116}
Let $A\subset \Omega'$ open  such that $\hn(\dod \cap B_\varrho(x)) \leq \wt L \varrho^{n-1}$ for any $B_\varrho(x) \subset A$,  $u_0\in W^{1,\infty}(\Omega';\Rn)$, and $u\in GSBD^2(A)$ be a solution to \eqref{1006181107}. Then
\begin{equation*}
\widehat{u}:= u-u_0 \in GSBD^2(A)
\end{equation*}
is a $(\omega, 1/2)$-quasi-minimiser of $G_0(\cdot, A)$, with $\omega$ depending only on $n$, $\C$, $\|e(u_0)\|_\infty$, 
and  $\wt L$. 
\end{proposition}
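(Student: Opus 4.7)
The plan is to transfer the minimality of $u$ in \eqref{1006181107} into a quasi-minimality for $\widehat u = u - u_0$ against the zero-Dirichlet functional $G_0$; the defect is produced entirely by the cross term generated when the quadratic bulk $\C e(u):e(u) = \C(e(\widehat u)+e(u_0)):(e(\widehat u)+e(u_0))$ is expanded.

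Given $B_\varrho(x) \subset A$ and a competitor $v \in GSBD^2(A)$ with $\{v \neq \widehat u\} \Subset B_\varrho(x)$ and $G_0(v, B_\varrho(x)) < \infty$, I would set $\bar v := v + u_0$ on $B_\varrho(x)$ and $\bar v := u$ on $A \setminus B_\varrho(x)$. Since $\widehat u = v = 0$ a.e.\ on $B_\varrho(x) \setminus (\Omega \cup \dod)$ and $u = u_0$ on $A \setminus (\Omega \cup \dod)$, the function $\bar v$ coincides with $u_0$ a.e.\ on $A \setminus (\Omega \cup \dod)$ and satisfies $\{\bar v \neq u\} \Subset A$, so it is admissible for \eqref{1006181107}. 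Because $u_0$ is Lipschitz, $J_u = J_{\widehat u}$ and $J_{\bar v} = J_v$. Expanding the bulk terms in the minimality inequality of \eqref{1006181107}, the $\int \C e(u_0):e(u_0)\dx$ contribution cancels on both sides and one obtains
\begin{equation*}
G_0(\widehat u, B_\varrho(x)) - G_0(v, B_\varrho(x)) \leq \int_{B_\varrho(x)} \bigl[ \C(e(v)-e(\widehat u)):e(u_0) + \C e(u_0):(e(v)-e(\widehat u)) \bigr] \dx .
\end{equation*}
Using the boundedness of $\C$ and the Cauchy-Schwarz inequality, together with the ellipticity in \eqref{eq:hpC} to control $L^2$-norms of $e(\cdot)$ by $G_0$, the right-hand side is at most $c \,\|e(u_0)\|_\infty \varrho^{n/2} \sqrt{G_0(\widehat u, B_\varrho(x)) + G_0(v, B_\varrho(x))}$. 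Without loss of generality $G_0(v, B_\varrho(x)) \leq G_0(\widehat u, B_\varrho(x))$ (otherwise the left-hand side is $\leq 0$), so it remains to prove the a priori bound $G_0(\widehat u, B_\varrho(x)) \leq C \varrho^{n-1}$.

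For this a priori bound, by Fubini I pick $\varrho' \in (\varrho/2, \varrho)$ with $\hn(J_u \cap \partial B_{\varrho'}(x)) = 0$ and test the minimality of $u$ with the admissible competitor $\bar w := u_0 \chi_{B_{\varrho'}(x)} + u \chi_{A \setminus B_{\varrho'}(x)}$, which is smooth inside $B_{\varrho'}(x)$, so $J_{\bar w} \cap B_\varrho(x) \subseteq (J_u \cap (B_\varrho \setminus B_{\varrho'})) \cup \partial B_{\varrho'}(x)$. Cancelling the common terms outside $B_{\varrho'}(x)$ yields
\begin{equation*}
\int_{B_{\varrho'}(x)} \hspace{-1em} \C e(u):e(u)\dx + 2\beta \hn(J_u \cap B_{\varrho'}(x)) \leq \int_{B_{\varrho'}(x)} \hspace{-1em} \C e(u_0):e(u_0)\dx + 2\beta \hn(\partial B_{\varrho'}(x)) \leq C_1 (\varrho')^{n-1},
\end{equation*}
the last inequality using $\|e(u_0)\|_\infty < \infty$ and $\varrho' \leq 2\,\mathrm{diam}\,\Omega$ to absorb the $u_0$ bulk contribution into the leading $\hn(\partial B_{\varrho'})$ scaling. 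Since $|e(\widehat u)|^2 \leq 2|e(u)|^2 + 2|e(u_0)|^2$ and $J_{\widehat u} = J_u$, this gives $G_0(\widehat u, B_{\varrho'}(x)) \leq C_2 (\varrho')^{n-1}$, and monotone convergence $\varrho' \nearrow \varrho$ yields the desired bound. The $\wt L$-dependence enters because $J_{\widehat u}$ may contain portions of $\dod$ (where $u \neq u_0$ in trace sense), whose $\hn$-measure is controlled by the hypothesis $\hn(\dod \cap B_\varrho(x)) \leq \wt L \varrho^{n-1}$.

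Plugging the a priori bound back into the first estimate gives $G_0(\widehat u, B_\varrho(x)) - G_0(v, B_\varrho(x)) \leq c' \varrho^{n/2} \sqrt{\varrho^{n-1}} = \omega \varrho^{n-1+1/2}$, proving the $(\omega, 1/2)$-quasi-minimality with $\omega$ depending only on the listed parameters. The main obstacle is the a priori bound on $G_0(\widehat u, B_\varrho(x))$: one must choose a competitor $\bar w$ that wipes out the possibly large Dirichlet jumps of $\widehat u$ on $\dod \cap B_{\varrho'}$ at the sole price of the $(n-1)$-dimensional sphere $\partial B_{\varrho'}$, and absorb the $\|e(u_0)\|_\infty$ contribution into the correct $(\varrho')^{n-1}$ scaling using the boundedness of $\mathrm{diam}\,\Omega$.
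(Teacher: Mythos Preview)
Your argument is correct and follows essentially the same route as the paper: expand the quadratic $\C e(u){:}e(u)$, control the cross term $\int \C e(u_0){:}e(v-\widehat u)$ by Cauchy--Schwarz (the paper uses Young with $\varepsilon=\varrho^{1/4}$, which is equivalent), and close with an a~priori bound $G_0(\widehat u,B_\varrho)\le C\varrho^{n-1}$ obtained from a comparison competitor.

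The one genuine difference is your choice of comparison competitor for the a~priori bound. The paper compares $u$ with $u\,\chi_{A\sm(\Omega\cap B_\varrho(x))}$, which creates new jump along $\partial(\Omega\cap B_\varrho)\supset \dod\cap B_\varrho$ and is precisely where the hypothesis $\hn(\dod\cap B_\varrho)\le \wt L\varrho^{n-1}$ is invoked. Your competitor $\bar w=u_0\chi_{B_{\varrho'}}+u\chi_{A\sm B_{\varrho'}}$ is slightly cleaner: it creates jump only on $\partial B_{\varrho'}$, so your a~priori bound (and hence your $\omega$) does \emph{not} depend on $\wt L$. Your parenthetical remark that ``the $\wt L$-dependence enters because $J_{\widehat u}$ may contain portions of $\dod$'' is therefore off: in your own argument that contribution already sits on the left-hand side of the a~priori inequality and is absorbed by $\hn(\partial B_{\varrho'})$ on the right; you never use $\wt L$. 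This is harmless (you prove a marginally stronger statement than claimed), but you should drop that sentence.
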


\begin{proof}
Fix $B_\varrho(x) \subset A$  with $\varrho\leq 1$  and consider $v\in GSBD^2(A)$ with $G_0(v,A) < \infty$ and $\{v \neq \widehat{u}\}\Subset B_\varrho(x)$, so that $v+u_0$ is admissible for \eqref{1006181107} (notice that $v+u_0=u_0$ a.e.\ in $A \sm (\Omega\cup \dod)$ by \eqref{1006180918}).  
Being $u_0 \in W^{1,\infty}(\Omega';\Rn)$ and $u$ a solution to \eqref{1006181107}, we have that $\widehat{u}\in GSBD^2(A)$ with $\widehat{u}=0$ a.e.\ in $A \sm (\Omega\cup \dod)$ and  (by manipulating the quadratic forms in the minimality) 
\begin{equation}\label{0710181934}
\begin{split}
\int \limits_{B_\varrho(x)} \C e(\widehat{u}) \colon e(\widehat{u}) \dx  & + 2 \beta\, \hn(J_{\widehat{u}} \cap B_\varrho(x)) \leq \int \limits_{B_\varrho(x)} \C e(v) \colon e(v) \dx \\&  \hspace{1em} + 2 \int \limits_{B_\varrho(x)} \C e(u_0) \colon e(v-\widehat{u}) \dx + 2 \beta\, \hn(J_v \cap B_\varrho(x)) \,.
\end{split}
\end{equation}
We have that 
\begin{equation}\label{1006181324}
\begin{split}
\int \limits_{B_\varrho(x)}&  \C e(u_0) \colon e(v-\widehat{u}) \dx  \leq C_{\C} \|e(u_0)\|_{L^2(B_\varrho(x))} \|e(v-\widehat{u})\|_{L^2(B_\varrho(x))} 
\\& \hspace{-1em}\leq C_{\C}  \Big( \|e(u_0)\|_{L^2(B_\varrho(x))} \|e(\widehat{u})\|_{L^2(B_\varrho(x))}  +  \frac{\|e(u_0)\|_{L^2(B_\varrho(x))}^2 }{ 2\varepsilon^2}+ \frac{\varepsilon^2}{2} \|e(v)\|_{L^2(B_\varrho(x))}^2  \Big)  \,.
\end{split}
\end{equation}
Now
\begin{equation}\label{1006181217}
\|e(u_0)\|_{L^2(B_\varrho(x))} \leq C_{\|e(u_0)\|_\infty} \varrho^{\frac{n}{2}} \,,
\end{equation}
since $u_0\in W^{1,\infty}(\Omega';\Rn)$, and
\begin{equation}\label{1006181325}
\|e(\widehat{u}  + u_0 )\|^2_{L^2(B_\varrho(x))} + 2 \beta\,\hn(J_{\widehat{u}}\cap B_\varrho(x)) \leq  \tilde{c}_0 \varrho^{n{-}1}\,,
\end{equation}
with  $\tilde{c}_0=\tilde{c_0}(n,\,\C,\, \|e(u_0)\|_\infty,\, \wt L)$,  by comparing in \eqref{1006181107} the functional evaluated in $\widehat{u} + u_0 $ and in $((\widehat{u} + u_0) \chi_{A\sm (\Omega\cap B_\varrho(x))}  ) $ (cf.\ \cite[Lemma~3.10]{BabGia14},  in particular $\|e(u_0)\|_\infty$ and the fact that $\varrho\leq 1$ are used to possibly control $\|e(u_0)\|_{L^2(B_\varrho(x)\sm \Omega)}$ in terms of $\varrho^{n-1}$). 
By \eqref{0710181934} it follows that
\begin{equation*}
\begin{split}
G_0(\widehat{u}, B_\varrho(x)) \leq  G_0(v, B_\varrho(x)) + 2 \int \limits_{B_\varrho(x)} \C e(u_0) \colon e(v-\widehat{u}) \dx\,,
\end{split}
\end{equation*}
and collecting \eqref{1006181324} with $\varepsilon:=\varrho^{1/4}$, \eqref{1006181217}, \eqref{1006181325}, we get
\begin{equation*}
\begin{split}
G_0(\widehat{u}, B_\varrho(x)) \leq (1 + C_{\C}\, \varrho^{1/2})\, G_0(v, B_\varrho(x)) + C_{(\tilde{c}_0, \,\|e(u_0)\|_\infty)} \varrho^{n-\frac{1}{2}}\,.
\end{split}
\end{equation*}
Taking the infimum with respect to $v$  (admissible in the minimum problem \eqref{1006180922}, for $B_\varrho(x)$ in place of $A$)  we get
\begin{equation*}
\mathrm{Dev\,}(\widehat{u}, B_\varrho(x)) \leq \omega \varrho^{n-\frac{1}{2}}\,,
\end{equation*}
since $\m(\widehat{u}, B_\varrho(x)) \leq G_0(\widehat{u}, B_\varrho(x)) \leq \tilde{c_0}\varrho^{n-1}$, by \eqref{1006180922}, \eqref{1006181217}  and \eqref{1006181325}.   This concludes the proof. 
\end{proof}
\begin{remark}\label{rem:1006181350}
Notice that in Proposition~\ref{prop:1006181116} we have employed $u_0\in W^{1,\infty}$ in \eqref{1006181217}; it would be enough to require $u_0 \in W^{1,p}(\Omega';\Rn)$ with $n\frac{p-2}{2p}= n{-}1 + \widehat{\eta}$, for some $\widehat{\eta}>0$ to get that $\|e(u_0)\|_{L^2(B_\varrho(x))} \leq C_{\|e(u_0)\|_p} \varrho^{\frac{n-1+\widehat{\eta}}{2}}$ and that $\widehat{u}$ is a $(\omega, s)$-quasi-minimser, with $s$ depending on $\widehat{\eta}$.
\end{remark}

We now start the proof of regularity results for quasi-minimisers of $G_0$. We require that $\dod$ is of class $C^1$ to guarantee that $\dod$ converges to an hyperplane in the blow-up near any $x\in \dod$. The first lemma is a decay estimate for $G_0$, that holds for quasi-minimisers with small density of jump. The point is to made quantitative these smallness, and uniform with respect to the (sufficiently small) balls. The proof of the lemma is based on the results of the previous sections, and follows the structure of \cite[Lemma~6.6]{BabGia14}. 

\begin{remark}\label{rem:sceltaeta}
The constants in the following lemma depend also on a small parameter $\eta$, and the estimates are obtained for balls $B$ such that $\mathrm{dist\,}(B, \partial(\dod))>\eta$, where $\partial(\dod)$ is the boundary of $\dod$ in the relative topology of $\dom$. The parameter $\eta$ is employed only to guarantee that $\dod$ converges to an hyperplane also in blow-ups around points $x_h$ tending to $\ol x \in \partial(\dod)$. Such a property is ensured without the introduction of any $\eta$ if $\dod$ is uniformly of class $C^1$ up to $\ol{\dod}$, which is true for instance if $\dod$ is compactly contained in an open subset of $\dom$ of class $C^1$ (all these topological notions refer to the relative topology of $\dom$). In this case also the estimates in Theorem~\ref{teo:Teor3.4BG} are independent of $\eta$.
\end{remark}

\begin{lemma}\label{le:6.6BG} Let $\dod$ of class $C^1$.
Let $C_0$, $C'_0$ be the constants in Theorems~\ref{teo:ellreg1} and \ref{teo:ellreg2}, respectively. For every $\tau \in (0,1)$ and $\eta>0$ there exist positive constants $\varepsilon$, $\theta$ and $r$, depending on $\tau$ and $\eta$, such that
\begin{equation}\label{1006181802}
G_0(u, B_{\tau \varrho}(x))\leq 2 \max\{4^n, C_0, C_0'\}\, \tau^n\, G_0(u, B_\varrho(x))
\end{equation}
for every $B_\varrho(x)\subset \Omega'$ with $x\in \Omega\cup \dod$, $\varrho\leq r$,  $\mathrm{dist\,}(B_\varrho(x), \partial(\dod))>\eta$,  and for every $u\in GSBD^2(B_\varrho(x))$ with $u=0$ a.e.\ in $B_\varrho(x) \sm (\Omega\cup \dod) $ and
\begin{equation*}
\hn(J_u \cap B_\varrho(x)) \leq \varepsilon \varrho^{n-1}\,,\qquad \mathrm{Dev\,}(u, B_\varrho(x)) \leq \theta \, G_0(u, B_\varrho(x))\,.
\end{equation*}
\end{lemma}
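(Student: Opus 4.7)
The plan is a blow-up argument by contradiction, following \cite[Lemma~6.6]{BabGia14}. If \eqref{1006181802} fails, there exist $\tau\in(0,1)$, $\eta>0$, and sequences $\varepsilon_h,\theta_h,r_h\downarrow 0$, balls $B_{\varrho_h}(x_h)\subset\Omega'$ with $\varrho_h\leq r_h$, $x_h\in\Omega\cup\dod$, $\mathrm{dist}(B_{\varrho_h}(x_h),\partial(\dod))>\eta$, and functions $u_h\in GSBD^2(B_{\varrho_h}(x_h))$ vanishing on $B_{\varrho_h}(x_h)\setminus(\Omega\cup\dod)$, satisfying $\hn(J_{u_h}\cap B_{\varrho_h}(x_h))\leq\varepsilon_h\varrho_h^{n-1}$ and $\mathrm{Dev}(u_h,B_{\varrho_h}(x_h))\leq\theta_h G_0(u_h,B_{\varrho_h}(x_h))$, yet
\begin{equation*}
G_0(u_h,B_{\tau\varrho_h}(x_h)) > 2K\tau^n\, G_0(u_h,B_{\varrho_h}(x_h)),\qquad K:=\max\{4^n,C_0,C_0'\}.
\end{equation*}
I set $e_h:=G_0(u_h,B_{\varrho_h}(x_h))>0$ (otherwise the estimate is trivial) and rescale $v_h(y):=(\varrho_h^{n-2}/e_h)^{1/2}u_h(x_h+\varrho_h y)$ on $B_1$. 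A direct computation gives $\int_{B_1}\C e(v_h):e(v_h)\,\dz + 2c_h\hn(J_{v_h}\cap B_1)=1$ with rescaled toughness $c_h:=\beta\varrho_h^{n-1}/e_h$, together with $\hn(J_{v_h}\cap B_1)\leq\varepsilon_h\to 0$ and $\mathrm{Dev}_{D_h}(v_h,c_h,B_1)\to 0$, where $D_h:=B_1\setminus\varrho_h^{-1}((\Omega\cup\dod)-x_h)$ is the forbidden region where $v_h=0$.

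I would next analyse the blow-up of the constraint $D_h$. Since $\dod$ is of class $C^1$ and $\mathrm{dist}(B_{\varrho_h}(x_h),\partial(\dod))>\eta$, after a rotation the set $B_{\varrho_h}(x_h)\setminus(\Omega\cup\dod)$ is described as a $C^1$-subgraph, so $D_h$ is the subgraph in $B_1$ of $g_h(y'):=\varrho_h^{-1}\bigl(\tilde g_h(x_h'+\varrho_h y')-(x_h)_n\bigr)$ for a suitable $C^1$ function $\tilde g_h$. Two scenarios arise along a subsequence: either $\gamma_h:=\mathrm{dist}(x_h,\dod)/\varrho_h\to+\infty$ (or $\gamma_h\to\gamma\geq 1$), in which case $D_h\cap B_1=\emptyset$ eventually and no effective constraint survives in $B_1$; or $\gamma_h\to\gamma\in[0,1)$, and by $C^1$-smoothness the $g_h$ converge locally uniformly to the constant $-\gamma$. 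Applying the ball version of Theorem~\ref{thm:Teor4} (Remark~\ref{rem:1606180911}) with $p=2$, I obtain a subsequence with $v_h\to v$ a.e.\ on $B_1$, $v\in H^1(B_1;\Rn)$, $v=0$ on $B_1\setminus H_\gamma$, $v$ a local minimiser of $E_{0,\gamma}(\cdot,B_1)$ (Remark~\ref{rem:1206181208}), and for every $t\in(0,1)$
\begin{equation*}
e(v_h)\to e(v)\text{ strongly in }L^2(B_t;\Mnn),\qquad c_h\hn(J_{v_h}\cap B_t)\to 0.
\end{equation*}

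Finally, I apply the elliptic decay of Section~2 to the limit $v$. In the interior scenario (including $\gamma\geq 1$, so that the constraint does not act on $B_{1/2}$), Theorem~\ref{teo:ellreg1} yields $\int_{B_{\varrho/2}}\C e(v):e(v)\,\dz\leq C_0\varrho^n\int_{B_{1/2}}\C e(v):e(v)\,\dz\leq C_0\varrho^n$ for $\varrho\leq 1/2$; combining with the trivial bound $\int_{B_\tau}\leq 1\leq 4^n\tau^n$ when $\tau\geq 1/4$ gives $\int_{B_\tau}\C e(v):e(v)\,\dz\leq\max\{C_0,4^n\}\tau^n$. In the boundary scenario $\gamma\in[0,1)$, I pick $R_0\in(\tfrac{4}{3}\gamma,1)$ and argue analogously via Theorem~\ref{teo:ellreg2}, obtaining $\int_{B_\tau}\C e(v):e(v)\,\dz\leq\max\{C_0',4^n\}\tau^n$. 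The strong $L^2$-convergence of $e(v_{h_j})$ on $B_\tau$ and the vanishing of the jump contribution then give
\begin{equation*}
\lim_{j}\frac{G_0(u_{h_j},B_{\tau\varrho_{h_j}}(x_{h_j}))}{e_{h_j}}=\int_{B_\tau}\C e(v):e(v)\,\dz\leq K\tau^n,
\end{equation*}
contradicting the standing assumption and proving \eqref{1006181802}. The main obstacle, as I see it, is the boundary analysis: one must check that the blow-up of $\dod$ produces a constraint converging locally uniformly in the precise sense required by Theorem~\ref{thm:Teor4} (which is where the $C^1$-hypothesis on $\dod$ and the safety parameter $\eta$ are both needed), and then stitch together the interior and boundary elliptic decays so that the single constant $\max\{4^n,C_0,C_0'\}$ covers all regimes uniformly in $\tau\in(0,1)$.
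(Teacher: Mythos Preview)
Your argument follows the paper's route: contradiction, blow-up rescaling, compactness for the rescaled sequence, elliptic decay for the limit. Two points in your case analysis need correction.

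First, in the ``interior'' scenario (when $D_h\cap B_1=\emptyset$ eventually) Theorem~\ref{thm:Teor4} does not apply as stated, since its hypothesis requires $g_h\to -\gamma$ with $\gamma\in[0,1)$. Here one must invoke the unconstrained compactness result \cite[Theorem~4]{ChaConIur17}, which only gives convergence of $v_h-a_h$ for suitable infinitesimal rigid motions $a_h$, not of $v_h$ itself. The paper actually merges this case into the wider range $\gamma\in(1/2,1]$ and works on $B_{1/2}$, where the constraint is invisible.

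Second, and this is the concrete gap, you invoke Theorem~\ref{teo:ellreg2} for every $\gamma\in[0,1)$ by picking $R_0\in(\tfrac{4}{3}\gamma,1)$. But that theorem is stated only for $\gamma\in[0,1/2]$; for $\gamma\geq 3/4$ no admissible $R_0<1$ even exists, and in any case letting $R_0$ vary with $\gamma$ makes $C_0'$ $\gamma$-dependent, which is incompatible with the lemma's statement (where $C_0'$ is a fixed constant). The paper splits at $\gamma=1/2$: for $\gamma\in(1/2,1]$ the constraint does not meet $B_{1/2}$, so one applies \cite[Theorem~4]{ChaConIur17} on $B_{1/2}$ and then Theorem~\ref{teo:ellreg1}; for $\gamma\in[0,1/2]$ one applies Theorem~\ref{thm:Teor4} on $B_1$ and then Theorem~\ref{teo:ellreg2} with a single choice of $R_0$.

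Once you move the threshold from $1$ to $1/2$ and cite the correct compactness result in the interior range, your proof coincides with the paper's.
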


\begin{proof}
If $\tau \geq 1/4$, then
\begin{equation*}
4^n\, \tau^n\, G_0(u, B_\varrho(x)) \geq G_0(u, B_\varrho(x)) \geq G_0(u, B_{\tau\varrho}(x))\,,
\end{equation*}
so \eqref{1006181802} follows. Then let $\tau < 1/4$.

We argue by contradiction, assuming that there exist $\tau < 1/4$  and $\eta>0$  such that there are sequences
\begin{equation*}
\varepsilon_h\,,\ \theta_h\,,\ r_h \to 0\,,
\end{equation*}
$B_{\varrho_h}(x_h)\subset \Omega'$ with $x_h \in \Omega\cup \dod$, $\varrho_h \leq r_h$,  $\mathrm{dist\,}(B_{\varrho_h}(x_h),\partial(\dod))>\eta$,  and $u_h \in GSBD^2(B_{\varrho_h}(x_h))$ with $u_h=0$ a.e.\ in $B_{\varrho_h}(x_h) \sm (\Omega \cup \dod)$,
\begin{equation*}
\hn(J_{u_h} \cap B_{\varrho_h}(x_h))=\varepsilon_h \varrho_h^{n-1}\,,\qquad \mathrm{Dev\,}(u_h, B_{\varrho_h}(x_h)) = \theta_h\, G_0(u_h, B_{\varrho_h}(x_h))\,,
\end{equation*} 
and
\begin{equation*}
G_0(u_h, B_{\tau \varrho_h}(x_h)) >  2 \max\{4^n, C_0, C_0'\}\, \tau^n\, G_0(u_h, B_{\varrho_h}(x_h))\,.
\end{equation*}
As usual  (see e.g.\ \cite{DeGCarLea}),  we rescale introducing the functions
 \begin{equation*}
 v_h(y):= \sqrt{\frac{\varrho_h^n}{G_0(u_h,   B_{\varrho_h}(x_h))} } \cdot \ \frac{u_h(x_h + \varrho_h y)}{\varrho_h} \qquad \text{for }y \in B_1\,,
 \end{equation*}
 and we call
 \begin{equation*}
 D_h:= \{y\in B_1 \colon x_h + \varrho_h y \in  B_{\varrho_h}(x_h)  \sm   (\Omega \cup \dod) \}\,.
 \end{equation*}
 Up to a subsequence $x_h \to \ol x \in \Omega\cup \dod$, since $\eta>0$.  If $D_h \neq \emptyset$ we have, thanks to \cite[Lemma~6.4]{BabGia14}, that, up to a futher subsequence, there is a coordinate system such that
 \begin{equation*}
 D_h := \{x=(x',x_n) \in B_1 \colon x_n \leq g_h(x')\}
 \end{equation*}
 for suitable $g_h\in C^1(\R^{n-1})$ with $g_h \to - \gamma$ locally uniformly, and $\gamma\in [0,1]$.  In this case one has $\ol x \in \dod$. 
 Moreover, the rescaling gives that
 \begin{equation}\label{1006181943}
 G_{D_h}(v_h, c_h, B_1)=1\,,\qquad \mathrm{Dev\,}(v_h, c_h, B_1)=\theta_h\,, \qquad\hn(J_{v_h} \cap B_1)=\varepsilon_h\,,
\end{equation}  
for 
\begin{equation*}
c_h:= \frac{\beta \varrho_h^{n-1}}{G_0(u_h, B_{\varrho_h}(x_h))}\,,
\end{equation*}
 and
 \begin{equation}\label{1006181945}
G_{D_h}(v_h, c_h, B_\tau) > 2 \max\{4^n, C_0, C_0'\}\, \tau^n\,.
 \end{equation}
 We consider first the case where $\gamma$, obtained as the limit of $-g_h$, is in $(1/2, 1]$,  which is as the standard case in \cite{ChaConIur17}.  Notice that the case $\gamma=1$ corresponds to $B_1 \sm H_\gamma=\emptyset$, with $H_\gamma$ as in \eqref{1006181950}: we assume then $\gamma=1$ also in the case that $D_h = \emptyset$ for every $h$. We apply \cite[Theorem~4]{ChaConIur17}, in the set $B_{1/2}$ with $k_h=0$, $\beta_h=c_h$ to the functions $v_h$ (the assumptions are satisfied by \eqref{1006181943}, that in particular holds with inequalities for $B_{1/2}$ in place of $B_1$): then there are $v\in H^1(B_{1/2};\Rn)$ and $a_h$ affine with $e(a_h)=0$ such that (up to a subsequence, not relabelled)
 \begin{equation*}
 v_h - a_h \to v \qquad \text{a.e.\ in } B_{1/2}
 \end{equation*}
 and $v$ is a local minimiser of $E_{0, \gamma}(\cdot, B_{1/2})$ with
  \begin{equation*}
 \int \limits_{B_{\tau}} \C e(v) \colon e(v) \dx = \lim_{h\to \infty} G_0(v_h, c_h, B_{\tau}) = \lim_{h\to \infty} G_{D_h}(v_h, c_h, B_{\tau})\,,
\end{equation*}  
and the same holds for every $\tilde{\tau}\leq 1/2$ in place of $\tau$ using that $D_h \cap B_{1/2}=\emptyset$ for every $h$ large enough. In particular, taking $\tilde{\tau}=1/2$, \eqref{1006181943} implies $\int_{B_{1/2}} \C e(v) \colon e(v) \dx\leq 1$.
Now Theorem~\ref{teo:ellreg1} gives (recall $\tau < 1/4$)
\begin{equation*}
\lim_{h \to \infty} G_{D_h}(v_h, c_h, B_{\tau}) =  \int \limits_{B_{\tau}} \C e(v) \colon e(v) \dx \leq C_0 \, \tau^n\,,
\end{equation*}
and this contradicts \eqref{1006181945}.

On the other hand, if $\gamma\in [0, 1/2]$ we apply Theorem~\ref{thm:Teor4} (again, the assumptions are satisfied by \eqref{1006181943}): there are $v\in H^1(B_{1};\Rn)$ local minimiser of $E_{0, \gamma}(\cdot, B_1)$ such that (up to a subsequence, not relabelled)
 \begin{equation*}
 \begin{split}
 v_h &\to v \qquad \text{a.e.\ in } B_1\,,\\
 \lim_{h\to \infty} G_{D_h}(v_h, c_h,\, & B_{\tau}) = \int \limits_{B_{\tau}} \C e(v) \colon e(v) \dx  \,,
 \end{split}
\end{equation*}  
and this holds also for every $\tilde{\tau}\leq R_0 =  3/4$,  so that $\int_{ B_{3/4} } \C e(v) \colon e(v) \dx\leq 1$, by \eqref{1006181943}. Employing Theorem~\ref{teo:ellreg2}  (it is enough that $R_0> 2/3$)  we get
\begin{equation*}
\lim_{h \to \infty} G_{D_h}(v_h, c_h, B_{\tau}) =  \int \limits_{B_{\tau}} \C e(v) \colon e(v) \dx \leq C'_0 \, \tau^n\,,
\end{equation*}
in contradiction to \eqref{1006181945}.
\end{proof}
The following theorem is a general weak regularity result for all $(\omega,s)$-quasi-minimisers of $G_0(\cdot,A)$ (see Definition~\ref{def:quasiMin}).
\begin{theorem}\label{teo:Teor3.4BG}
Let $\dod$ of class $C^1$, and $A\subset \Omega'$ be an open set and $u\in GSBD^2(A)$ be a $(\omega,s)$-quasi-minimiser of $G_0(\cdot,A)$. Then  for every $\eta>0$,  there exist $\theta_0$ and $\varrho_0>0$, depending only on $n$, $\C$, $\beta$, $s$, $\omega$,  $\eta$,  such that
\begin{equation}\label{1006180944}
\hn(J_u \cap B_\varrho(x))\geq \theta_0\, \varrho^{n-1}
\end{equation}
for all balls $B_\varrho(x) \subset A$ with $x\in \overline{J^*_u}$, $\varrho\leq \varrho_0$,  and $\mathrm{dist\,}(B_\varrho(x), \partial(\dod))>\eta$, 
where
\begin{equation}\label{1106181722}
J^*_u:=\left \{ x \in J_u \colon \lim_{\varrho \to 0} \frac{\hn(J_u \cap B_\varrho(x))}{\omega_{n-1} \varrho^{n-1}}=1 \right\}\,,
\end{equation}
with $\omega_{n-1}$ the $(n{-}1)$-dimensional Lebesgue measure of the unit ball in $\R^{n-1}$.
\end{theorem}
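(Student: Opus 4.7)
The plan is to argue by contradiction, iterating the decay estimate of Lemma~\ref{le:6.6BG} to show that if the density lower bound fails on some admissible ball, then the Griffith energy (and hence the jump measure) on concentric balls of radii $\tau^k\varrho$ decays strictly faster than $(\tau^k\varrho)^{n-1}$, contradicting the defining property of $J^*_u$ in \eqref{1106181722}.

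First I would choose the iteration parameter. Let $M:=2\max\{4^n,C_0,C_0'\}$ and fix $\tau\in(0,1/4)$ with $M\tau<1/4$. With this $\tau$ and the given $\eta$, invoke Lemma~\ref{le:6.6BG} to obtain constants $\varepsilon=\varepsilon(\tau,\eta)$, $\theta=\theta(\tau,\eta)$, $r=r(\tau,\eta)$. Then I would define
\begin{equation*}
\theta_0:=\min\Big\{\tfrac{\omega_{n-1}}{2},\,2\beta\varepsilon\Big\}\,,\qquad \varrho_0:=\min\Big\{r,\Big(\tfrac{\theta\,\theta_0}{\omega}\Big)^{1/s}\Big\}\,,
\end{equation*}
the second ensuring that $\omega\varrho^s\le \theta\theta_0$ for all admissible radii.

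The core step is the iteration. Fix $x\in J^*_u$ and suppose by contradiction that
$\hn(J_u\cap B_{\varrho}(x))<\theta_0\varrho^{n-1}$ for some $\varrho\le\varrho_0$ with $B_\varrho(x)\subset A$ and $\mathrm{dist}(B_\varrho(x),\partial(\dod))>\eta$. Setting $\varphi_k:=G_0(u,B_{\tau^k\varrho}(x))$, I would prove inductively that for every $k\ge 0$
\begin{equation*}
\varphi_k\le (M\tau)^{k}\,\varphi_0\qquad\text{and}\qquad \hn(J_u\cap B_{\tau^k\varrho}(x))\le \varepsilon(\tau^k\varrho)^{n-1}.
\end{equation*}
The jump smallness at scale $\tau^k\varrho$ follows from $\hn(J_u\cap B)\le \varphi_k/(2\beta)$ together with the induction hypothesis and the choice of $\theta_0$. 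The deviation hypothesis of Lemma~\ref{le:6.6BG} at that scale follows from quasi-minimality: $\mathrm{Dev}(u,B_{\tau^k\varrho}(x))\le \omega(\tau^k\varrho)^{n-1+s}\le \theta\theta_0(\tau^k\varrho)^{n-1}\le \theta\varphi_k$, where the last inequality is the nontrivial case; the alternative case $\varphi_k<\theta_0(\tau^k\varrho)^{n-1}$ (i.e.\ the energy is already sub-critical) yields the next-step conclusion directly without invoking the lemma. Once the inductive hypotheses hold, Lemma~\ref{le:6.6BG} gives $\varphi_{k+1}\le M\tau^n\varphi_k$, completing the induction. Since $M\tau<1/4$, we obtain
\begin{equation*}
\frac{\hn(J_u\cap B_{\tau^k\varrho}(x))}{(\tau^k\varrho)^{n-1}}\le \frac{\varphi_k}{2\beta(\tau^k\varrho)^{n-1}}\le \frac{(M\tau)^k}{2\beta}\,\frac{\varphi_0}{\varrho^{n-1}}\xrightarrow[k\to\infty]{}0,
\end{equation*}
contradicting $x\in J^*_u$, for which this ratio tends to $\omega_{n-1}>0$.

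The extension from $J^*_u$ to $\overline{J^*_u}$ is a routine passage to the limit: given $x\in\overline{J^*_u}$ and $B_\varrho(x)\subset A$ with $\mathrm{dist}(B_\varrho(x),\partial(\dod))>\eta$, choose $x_j\in J^*_u$ with $x_j\to x$; for $j$ large the ball $B_{\varrho-|x-x_j|}(x_j)$ still satisfies $B\subset A$ and the $\eta$-distance condition (with a slightly larger $\eta/2$), so applying the previous case and letting $j\to\infty$ yields \eqref{1006180944} with a possibly adjusted constant. The delicate point throughout is the alternative just mentioned inside the induction, which is needed because quasi-minimality controls $\mathrm{Dev}$ only in absolute terms $\omega\varrho^{n-1+s}$, whereas Lemma~\ref{le:6.6BG} requires control relative to $G_0$; the calibration of $\theta_0$ and $\varrho_0$ precisely handles this dichotomy.
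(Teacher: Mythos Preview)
Your strategy---iterate the decay estimate of Lemma~\ref{le:6.6BG} to force the $(n{-}1)$-density of $J_u$ at $x$ to vanish, contradicting $x\in J^*_u$---is exactly the one the paper uses (by deferring to \cite[Theorem~3.4]{BabGia14}). Two issues, however.

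First, a slip: the inductive claim should read $\varphi_k\le (M\tau^n)^k\varphi_0$ (equivalently $\psi_k:=\varphi_k/(\tau^k\varrho)^{n-1}\le (M\tau)^k\psi_0$), not $\varphi_k\le(M\tau)^k\varphi_0$. Your final display is only consistent with the former; with the latter, $\varphi_k/(\tau^k\varrho)^{n-1}$ need not tend to~$0$.

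Second, and more substantially, the step ``jump smallness at scale $\tau^k\varrho$ follows from $\hn(J_u\cap B)\le\varphi_k/(2\beta)$ together with the induction hypothesis'' does not go through for small $k$. At $k=1$ you obtain
\[
\frac{\hn(J_u\cap B_{\tau\varrho}(x))}{(\tau\varrho)^{n-1}}\ \le\ \frac{M\tau}{2\beta}\cdot\frac{\varphi_0}{\varrho^{n-1}},
\]
and nothing in your setup controls $\varphi_0/\varrho^{n-1}$: the contradiction hypothesis bounds only the surface part $2\beta\hn(J_u\cap B_\varrho)/\varrho^{n-1}<2\beta\theta_0$, not the bulk part $\int_{B_\varrho}\C e(u){:}e(u)\,\mathrm{d}x$. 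The missing ingredient is precisely the a~priori energy upper bound for quasi-minimisers,
\[
G_0(u,B_\varrho(x))\ \le\ c_1\,\varrho^{n-1}\qquad\text{for all admissible }B_\varrho(x)\subset A,
\]
obtained by comparing $u$ with the competitor equal to $0$ on a slightly smaller ball; this is the ``equation (6.13)'' that the paper's proof explicitly singles out from \cite{BabGia14}. With this bound in hand one calibrates $\theta_0$ more carefully: fix $k_0$ with $(M\tau)^{k_0}c_1\le 2\beta\varepsilon$ and take $\theta_0\le \varepsilon\,\tau^{k_0(n-1)}$ (and also $\theta_0\le 2\beta\varepsilon$, $\theta_0<\omega_{n-1}$). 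Then for $k\le k_0$ the jump smallness comes from the crude estimate $\hn(J_u\cap B_{\tau^k\varrho})\le\hn(J_u\cap B_\varrho)<\theta_0\varrho^{n-1}\le\varepsilon(\tau^k\varrho)^{n-1}$, while for $k>k_0$ it follows from $\psi_k\le(M\tau)^k c_1\le 2\beta\varepsilon$ as you intended. Your dichotomy for the $\mathrm{Dev}$ hypothesis is fine once this is in place.
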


\begin{proof}
Consider the set $J^*_u$ in \eqref{1106181722}. We have $\hn(J_u \sm J^*_u)=0$, since $J_u$ is countably $(n{-}1)$-rectifiable. Then we can follow exactly \cite[Theorem~3.4]{BabGia14} with $J^*_u$, $G$, $2 \max\{4^n, C_0, C_0'\}$  instead of $S_u \sm I$, $F$, $C_1$ therein, respectively (notice that equation (6.13) in \cite[Theorem~3.4]{BabGia14} holds also for $e(u)$ in place of $\nabla u$). It is enough to employ Lemma~\ref{le:6.6BG} in place of \cite[Lemma~6.6]{BabGia14}.
\end{proof}
We are now in the position to prove the main result of the paper, that is specialised in Corollary~\ref{cor:1106181816} obtaining the desired regularity for solutions to \eqref{1006181103}.
\begin{theorem}\label{teo:regDirLoc}
Let $\dod$ of class $C^1$,  $A\subset \Omega'$,  and $u \in GSBD^2(A)$ be a solution to \eqref{1006181107}. Then
\begin{equation}\label{1006180945}
\hn\big(A \cap (\overline{ J_u }\sm  J_u ) \big)=0\,,
\end{equation}
and  (up to passing to a precise representative $\wt u$, equal to $u$ $\mathcal{L}^n$-a.e.)
\begin{equation}\label{1106181747}
u \in  C^{ \infty }(A \cap \Omega \sm \ol  J_u ; \Rn)  \cap  C(A \sm \ol  J_u ; \Rn) \,. 
\end{equation}
 Moreover, for any $U \Subset A \sm \ol  J_u $ connected, there is $C_U>0$, depending on $U$, such that for any $x$, $y \in U$
\begin{equation}\label{1011180846}
\Big| \big(u(x)-u(y)\big) \cdot \frac{x-y}{|x-y|} \Big| \leq C_U |x-y|^{1/2}\,, 
\end{equation}
 and for any $x_0 \in \dod \cap A_\eta \sm \ol  J_u $
\begin{equation}\label{2010182027}
|u_0(x_0)- u(x)| \leq C |x-x_0|^{1/2}\,,\quad\text{for any $x \in A_\eta \sm \ol  J_u $ with }|x-x_0|\leq r_{x_0}\,,
\end{equation}
for suitable $r_{x_0}>0$ depending on $x_0$ and $C>0$ depending only on $n$, $L$, and on the parameters of the Griffith functional. 
\end{theorem} 
\begin{proof}  We divide the proof into  two  parts. Let us first fix $\eta >0$ and denote 
\begin{equation*}
A_\eta:= A \cap \{\mathrm{dist}(\cdot, \partial(\dod))>\eta\}\,.
\end{equation*}
 In the proof we work with $J^*_u$ (cf.\ \eqref{1106181722}), obtaining the statement for this set.  Then, \eqref{1106181747} for $J^*_u$ gives 
\[
\hn(\ol{J_u} \sm J^*_u)=0\,,
\]
so in particular $J_u$ is essentially closed and equal to $J^*_u$, up to a $\hn$-negligible set, and we can express all in terms of $J_u$.

\paragraph{\textbf{Part 1. Essential closedness of $J^*_u$ and internal regularity.}}
 By Proposition~\ref{prop:1006181116} (applied for $A_\eta$) and  Theorem~\ref{teo:Teor3.4BG} it follows that for any $x \in \overline{J^*_u} \cap A_\eta$  the upper $(n{-}1)$-dimensional  density of the measure $\hn \mres J_u$ at $x$  (cf.\ \cite[Definition~2.55]{AFP}),  that is 
\begin{equation*}
\limsup_{\varrho \to 0} \frac{\hn(J_u \cap B_\varrho(x))}{\omega_{n-1} \varrho^{n-1}}\,,
\end{equation*} 
is greater than $\frac{\theta_0}{\omega_{n-1}}$. Therefore we may employ \cite[Theorem~2.56]{AFP} with $k=n{-}1$, $\mu= \hn \mres J_u$, $t=\frac{\theta_0}{\omega_{n-1}}$, and $B= A_\eta \cap  \overline{J^*_u}$,  to get that $ \hn(A_\eta \cap \overline{J^*_u} \sm J_u)=0$.  
We notice that 
\begin{equation}\label{3110181016}
 \hn(A_\eta \cap \overline{J^*_u} \sm J^*_u)=0\,,
\end{equation}
since $\hn(J_u \sm J^*_u)=0$, being $J_u$ countably $(n{-}1)$-rectifiable.

Since $\hn(J_u \cap A_\eta \sm \overline{J^*_u}) \subset \hn(J_u \sm J^*_u) =0$, by the slicing properties in the definition of $G(S)BD$ we get that $u\in H^1_{\mathrm{loc}}(A_\eta \sm \overline{J^*_u})$.  By regularity of solutions to $\mathrm{div} (\C e(u))=0$ in open sets (see e.g.\ \cite[Theorem~6.2.1]{Mor66}) it follows that 
\begin{equation}\label{1106181810''}
u \in C^{ \infty }(A_\eta \cap \Omega \sm \overline{J^*_u};\Rn)\,. 
\end{equation}

\paragraph{\textbf{Part 2. Continuity up to $\dod$.}}
We assume that $A \sm \Omega \neq \emptyset$ and prove that 
\begin{equation}\label{2110180135}
u \in C(A_\eta \sm \overline{J^*_u}; \Rn)\,.
\end{equation}
Let $L$ be the Lipschitz constant of $\dod \cap A_\eta$ (regarded as the common boundary between $\Omega \cap A_\eta$ and $\big(\Omega' \sm (\Omega \cup \dod)\big) \cap A_\eta$).

Since $u-u_0$ is a $(\omega, 1/2)$ quasi-minimiser of $G_0(\cdot, A_{ \eta })$ (see Definition~\ref{def:quasiMin} and Proposition~\ref{prop:1006181116}), for any $B_\varrho(x) \subset A_\eta$ we have (see \eqref{1006181325})
\begin{equation}\label{2010182210}
\|e(u)\|^2_{L^2(B_\varrho(x))} \leq \tilde{c}_0 \varrho^{n-1}\,,
\end{equation}
with $\tilde{c}_0$ depending on $n$, $\C$, and $L$.
{
  Given $x,\varrho$ with $B_\varrho(x)\subset A_\eta\sm\ol{J^*_u}$  and $\varrho\leq 1$,  there exists an infinitesimal
  rigid motion, that is an affine function
  \[
    a_{x,\varrho} (y) = u_{x,\varrho} + S_{x,\varrho}(y-x),
  \]
  where $u_{x,\varrho}$ is the average of $u$ over $B_{\varrho}(y)$ and $S_{x,\varrho}$ a linear skew-symmetric map,
  such that
\begin{equation}\label{2010182321}
\int \limits_{B_\varrho(x)} |u -{a}_{x, \varrho}|^2 \dz \leq C \varrho^2 \int \limits_{B_\varrho(x)} |e(u)|^2 \dz \leq C \varrho^{n+1}\,, \quad\text{for any $\varrho \leq r$},
\end{equation}
thanks to the Poincaré-Korn inequality and \eqref{2010182210}.
}

Let us fix $x_0 \in \partial_D\Omega \cap A_\eta \sm \ol J^*_u$.
 In the following we show that $u$ admits a precise representative $\wt u$ (namely, $\wt u=u$ a.e.) defined everywhere in $A_\eta \sm \ol J^*_u$  and  prove  \eqref{1011180846} and \eqref{2010182027}, 
arguing in the spirit of
Campanato's theorem \cite{Camp63} (see also \cite[Theorem~7.51]{AFP}). In the rest of the proof $C$ will denote a constant depending only on $n$, $L$, 
and on the parameters of the Griffith functional.

By the regularity of $\partial_D\Omega \cap A_\eta$, we find a hyperplane $H_0$, with normal $\nu_0$, a $L$-Lipschitz function $l_0 \colon H_0\to \R$, and $r_0$, $h_0>0$ such that, for 
\[C_0:=\{x+\nu_0\,  y \colon x \in B_{r_0}(x_0) \cap H_0, \, |y| < h_0 \}\,,\] we have  
\[
\partial\Omega \cap C_0 = \{ x+ \nu_0 \, y \colon x \in B_{r_0}(x_0) \cap H_0,\, l_0(x)=y\},\]
and $\Omega \cap C_0 = \{ x+ \nu_0 \, y \colon x \in B_{r_0}(x_0) \cap H_0,\, -h_0 < y <l_0(x)\}$.
Moreover, we may assume that $B_{r_0}(x_0) \subset A_\eta \sm \ol J^*_u$.

\paragraph{\bf Step 1.}  First, let us prove that $u$ admits a precise representative in any point of $A_\eta \sm \ol J^*_u$ and estimate its distance from the average of $u$ in small balls centered in the point.
\def\mwt{}
Let $x,r$ with with $B_r(x)\subset A_\eta$. We claim that for any $\varrho \leq r$,
\begin{equation}\label{2110180017}
\| \mwt{a}_{x, \varrho} - \mwt{a}_{x, \varrho/2}\|_{L^\infty( B_{\varrho/2}(x)) } \leq C \varrho^{1/2}\,.
\end{equation}
Indeed, { as $|\mwt{a}_{x, \varrho} - \mwt{a}_{x, \varrho/2}|^2\leq 2 |u-\mwt{a}_{x, \varrho}|^2 + 2 |u-\mwt{a}_{x, \varrho/2}|^2$~a.e., using \eqref{2010182321}} for $\varrho$ and $\varrho/2$ we deduce
\begin{equation*}
\int \limits_{B_{\varrho/2}(x)} |\mwt{a}_{x, \varrho} - \mwt{a}_{x, \varrho/2}|^2 \dz \leq C \varrho^{n+1}\,,
\end{equation*}
and then 
\begin{equation}\label{3110181812}
 \| \mwt{a}_{x, \varrho} - \mwt{a}_{x, \varrho/2}\|_{L^\infty( B_{\varrho/2}(x)) } \leq C \Big(\varrho^{-n} \int \limits_{B_{\varrho/2}(x)} |\mwt{a}_{x, \varrho} - \mwt{a}_{x, \varrho/2}|^2 \dz\Big)^{1/2}  \leq C \varrho^{1/2}\,,
\end{equation}
so \eqref{2110180017} follows. Notice that in \eqref{3110181812} we have used the fact that $\mwt{a}_{x, \varrho} - \mwt{a}_{x, \varrho/2}$ is affine, and that for any $a \colon B_{\varrho/2}(x) \to \Rn$ affine, letting $a_\varrho(y):=a\Big(\frac{2 y}{\varrho}-x \Big) $, it holds
\[
\|a\|_{L^\infty(B_{\varrho/2}(x))} = \|a_\varrho\|_{L^\infty(B_1)} \leq \ol C_n \|a_\varrho\|_{L^2(B_1)}= \ol C_n \Big((\varrho/2)^{-n}\int \limits_{B_{\varrho/2}(x)} |a|^2 \dx\Big)^{1/2}\,,
\]
for $\ol C_n$ depending only on $n$.

From \eqref{2110180017} we get for any $i \in \N$  (formally replacing $\varrho$ with $2^{-i}\varrho$) 
 \begin{equation*}
\| \mwt{a}_{x, 2^{-i}\varrho} - \mwt{a}_{x, 2^{-i}\varrho/2}\|_{L^\infty(B_{2^{-i}\varrho/2}(x))} \leq C \, 2^{-i/2}\varrho^{1/2}\,.
\end{equation*}
{ We easily deduce that $u_{x,\varrho}=a_{x,\varrho}(x)$ is a Cauchy sequence
  so that there exists the limit $\wt{u}(x):=\lim_{\varrho \to 0} u_{x, \varrho}$. Moreover,}
\begin{equation}\label{2110180124}
\| \mwt{a}_{x, \varrho} - \mwt{a}_{x, 2^{-h}\varrho}\|_{L^\infty(B_{2^{-h}\varrho}(x))}  \leq \sum_{i=0}^{h-1} \| \mwt{a}_{x, 2^{-i}\varrho} - \mwt{a}_{x, 2^{-i}\varrho/2}\|_{L^\infty(B_{2^{-h}\varrho}(x))}  \leq  C \, \varrho^{1/2}\,,
\end{equation}
{ and we find in addition that}
\begin{equation}\label{2110180125}
| \mwt{a}_{x, \varrho}(x) - \wt{u}(x)| \leq C \, \varrho^{1/2}\,.
\end{equation}
{ In particular, we observe that any point away from $\ol{J_u^*}$ is a Lebesgue point.}

{ \paragraph{\textbf{Step~2.}} We now prove \eqref{1011180846}.}
Fix $U \Subset A \sm \ol J^*_u$ connected, so that there are $\ol r$, $\eta>0$ such that $B_{2\ol r}(z) \subset A_\eta \sm \ol J^*_u$ for any $z \in U$. Fix also $x$, $y \in U$, with $|x-y|=:r \leq \ol r $.

{ We have:
\begin{equation*}
\int \limits_{B_r(\frac{x+y}{2})} (\mwt a_{x, 2r} - \mwt a_{y, 2r}) \,\mathrm{d}z= |B_r| \Big[\big( \mwt a_{x, 2r}(x) - \mwt a_{y, 2r} (y) \big) + S_{x,2r}\Big(\frac{y+x}{2}-x\Big) - S_{y,2r}\Big(\frac{x+y}{2}-y\Big)\Big]\,.
\end{equation*}
 Moreover,  since the matrices are skew-symmetric,
\begin{equation*}\label{1011181025}
\Big[S_{x,2r}\Big(\frac{y-x}{2}\Big) - S_{y,2r}\Big(\frac{x-y}{2}\Big) \Big] \cdot (x-y)=0\,.
\end{equation*} 
Then, we use}
\begin{equation*}\label{1011181026}
\begin{split}
\Big| r^{-n} \hspace{-1em}\int \limits_{B_r(\frac{x+y}{2})} \hspace{-1em} (\mwt a_{x, 2r} - \mwt a_{y, 2r}) \,\mathrm{d}z \Big| &\leq \wt C_n \Big(r^{-n} \hspace{-1em}\int \limits_{B_r(\frac{x+y}{2})} \hspace{-1em} |\mwt a_{x, 2r} - \mwt a_{y, 2r}|^2 \,\mathrm{d}z \Big)^{1/2} \\ 
&\leq C \Big( r^{-n} \hspace{-1em} \int \limits_{B_{2r}(x)} \hspace{-0.7em}|u- \mwt a_{x, 2r}|^2 \dz\Big)^{1/2} \hspace{-0.7em} + C \Big( r^{-n} \hspace{-1em} \int \limits_{B_{2r}(y)} \hspace{-0.7em} |u- \mwt a_{y, 2r}|^2 \dz \Big)^{1/2} \hspace{-0.7em} \leq C r^{1/2}\,,
\end{split}
\end{equation*}
by \eqref{2010182321}. 
Collecting the relations above, and recalling \eqref{2110180125}, we deduce \eqref{1011180846}, under the assumption that $|x-y| \leq \ol r$.
Then \eqref{1011180846} is extended to general $x$, $y \in U$ by employing the connectedness of $U$ (cf.\ \cite[Teorema~I.2]{Camp63}). 

\paragraph{\bf Step 3.}
{
  We now prove \eqref{2010182027} for
\[
r_{x_0}=\frac{\min\{h_0 , r_0\}}{2}\,.
\]
}
We fix $x \in B_{r_{x_0}}(x_0)$, and for $r:= |x-x_0|\leq r_{x_0}$ let $y_0 := x_0 + \nu_0\, r \in \Omega' \sm \ol\Omega$. Then, { assuming without loss of generality that $L\ge 1$,
\begin{equation}\label{3110181243}
  B_{\frac{r}{L}}(y_0) 
  \subset \Omega' \sm \ol \Omega\,.
\end{equation}
}
{ We use \eqref{1011180846}
for a suitable $U\supset B_{r_{x_0}}(x_0)\cup B_{\frac{r}{L}(y_0)}$.}
Let us denote 
$y_i:= y_0 + (r/L) e_i$, for $i\in \{1, \dots, n\}$. Then, by \eqref{1011180846}, \eqref{3110181243}, and since $u_0$ is Lipschitz and $\wt u(y_i)= u_0(y_i)$, it holds that
\begin{equation*}
\begin{split}
\Big| \big( \wt u(x) - \wt u(y_0) \big) \cdot \frac{x-y_i}{r} \Big| &\leq \Big| \big( \wt u(x) - \wt u(y_i) \big) \cdot \frac{x-y_i}{r} \Big| + C \, r/L \\&\leq C_L \Big| \big( \wt u(x) - \wt u(y_i) \big) \cdot \frac{x-y_i}{|x-y_i|} \Big| + C \, r/L \leq C r^{1/2}
\end{split}
\end{equation*}
for any $i \in\{0, 1, \dots, n\}$. By combining these inequalities, we get 
\begin{equation*}
\Big| \big( \wt u(x) - \wt u(y_0) \big) \cdot e_i \Big| \leq C  \sqrt{r} 
\end{equation*}
for any $i \in \{1, \dots, n\}$, and then \eqref{2010182027}, using again that $u_0$ is Lipschitz and $\wt u ( y_0)= u_0(y_0)$.

 By the arbitrariness of $\eta>0$,  \eqref{1006180945} and \eqref{1106181747} follow  from \eqref{3110181016}, \eqref{1106181810''}, and \eqref{2110180135}. 
 The proof is then concluded.

\end{proof}

\begin{corollary}\label{cor:1106181816}
Let $\dod$ of class $C^1$, and $u \in GSBD^2(\Omega')$ be a solution to \eqref{1006181103}. Then 
\begin{equation*}\label{1106181819}
  J_u   \cup K \subset \Omega\cup \dod \text{ is (essentially) closed in the topology of } \Omega'\,,
\end{equation*}
and 
\begin{equation*}\label{1106181821}
u \in C^{ \infty }(\Omega \sm (   {J_u}  \cup K); \Rn) \cap C(\Omega' \sm (    {J_u}   \cup K); \Rn)\,.
\end{equation*}
\end{corollary}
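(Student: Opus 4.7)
The plan is to reduce Corollary~\ref{cor:1106181816} to Theorem~\ref{teo:regDirLoc} by localising away from the pre-existing crack $K$. First I would observe that $K$, being relatively closed in $\Omega\cup\dod$, is in fact closed as a subset of $\Omega'$: since $K\subset \Omega\cup\dod\subset\ol\Omega$, its closure in $\Omega'$ is contained in $\ol\Omega\cap\Omega'$, and the latter equals $\Omega\cup\dod$ because $\don\cap\Omega'=\emptyset$ (from the assumption $\Omega'\cap\dom=\dod$). Therefore $A:=\Omega'\sm K$ is open in $\Omega'$, and the ``outer'' Dirichlet region satisfies $A\sm(\Omega\cup\dod)=\Omega'\sm(\Omega\cup\dod)$.

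The next step is to check that $u|_A$ solves the localised Dirichlet problem \eqref{1006181107} on $A$. Given any admissible competitor $v\in GSBD^2(A)$ with $v=u_0$ in $A\sm(\Omega\cup\dod)$, I extend it to $\Omega'$ by $v=u$ on $K$. Since $K$ is closed in $\Omega'$, any jump produced on the gluing interface lies inside $K$ and is thus not charged by $\hn(\cdot\sm K)$ in \eqref{1006181103}; the extension is then a competitor for the global problem. Using the minimality of $u$ and cancelling the bulk contribution over $K$ (identical on both sides) one obtains
\begin{equation*}
\int_A \C e(u)\colon e(u)\dx + 2\beta\,\hn(J_u\cap A) \leq \int_A \C e(v)\colon e(v)\dx + 2\beta\,\hn(J_v\cap A),
\end{equation*}
which is the desired local minimality on $A$.

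At this point I can apply Theorem~\ref{teo:regDirLoc} with this choice of $A$. It delivers $\hn\big(A\cap(\ol{J^*_u}\sm J^*_u)\big)=0$ together with $u\in C^1(A\cap\Omega\sm \ol{J^*_u};\Rn)\cap C(A\sm\ol{J^*_u};\Rn)$. To convert these into the statement of the corollary, I use that $K$ is closed in $\Omega'$ to write
\begin{equation*}
\ol{J^*_u\cup K}^{\Omega'}\sm(J^*_u\cup K) \;\subset\; \ol{J^*_u\cap A}^{\,A}\sm(J^*_u\cap A),
\end{equation*}
which is $\hn$-negligible by the first assertion, and to rewrite $A\sm\ol{J^*_u}=\Omega'\sm(\ol{J^*_u}\cup K)$ and $A\cap\Omega\sm\ol{J^*_u}=\Omega\sm(\ol{J^*_u}\cup K)$ to recover the claimed regularity. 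The inclusion $J^*_u\cup K\subset\Omega\cup\dod$ follows since $u=u_0\in W^{1,\infty}$ a.e.\ in $\Omega'\sm(\Omega\cup\dod)$, so $J^*_u$ is $\hn$-negligible there.

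The single delicate point is the gluing step: one must ensure that reinserting the minimiser on $K$ does not create additional \emph{penalised} jumps. This is where closedness of $K$ in $\Omega'$ (so that $\partial K\subset K$) is used decisively, together with the fact that the Griffith functional only counts $J_u\sm K$; the argument then reduces to the standard cut-and-paste for GSBD functions across a closed interface. Everything else is bookkeeping that transfers the local conclusions of Theorem~\ref{teo:regDirLoc} on $A$ to global statements on $\Omega'$.
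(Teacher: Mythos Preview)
Your proposal is correct and follows exactly the paper's approach: set $A=\Omega'\sm K$ and invoke Theorem~\ref{teo:regDirLoc}. The paper records this in two lines, while you spell out the verification that $u|_A$ solves \eqref{1006181107} (via the cut-and-paste against the global problem) and the translation of the conclusions back to $\Omega'$; these are precisely the details the paper leaves implicit.
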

\begin{proof}
 It is enough to apply Theorem~\ref{teo:regDirLoc} with
\begin{equation*}
A= \Omega' \sm K\,.
\end{equation*}
It is immediate that $J_u \subset \Omega\cup \dod$, since $u=u_0$ a.e.\ in $\Omega' \sm (\Omega\cup \dod)$.
\end{proof}

\begin{remark}
By Corollary~\ref{cor:1106181816}, a strong solution to the Dirichlet problem for the Griffith energy \eqref{enGriffith} is $(u, \Gamma)$, where $u$ is a weak minimiser and $\Gamma= \ol {J_u}$. 
\end{remark}

\begin{appendices}

\section{Appendix}\label{sec:app}
In this appendix we deal with
a regularity result for solutions of elliptic equations with Dirichlet boundary conditions.
By \cite[Theorem~4.18, (i)]{McLean}, the estimate \eqref{0707182305} below is formally obtained  (with the notations in \cite{McLean}, in particular $\gamma$ represents there the trace operator) by taking $G_1=B_{3 R_0/4}$, $G_2=B_{R_0}$, $\mathcal{P}u=\mathrm{div\,}\C e(u)$, $f=0$, $\gamma u=0$. However, since the dependence of $C'_{0,m}$ from the other relevant known constant is not clearly specified in \cite{McLean}, and it is very important for Theorem~\ref{teo:ellreg2} and its consequences,
we give an outline of the proof.
 We refer to the notation of Section~\ref{sec:Sec2}, in particular recall \eqref{1006181950} and \eqref{0906181125}.

 \begin{theorem}\label{teo:ellreg2App}
 Let $\gamma\in [0, 1/2]$, $u\in H^1(B_1;\Rn)$ be a local minimiser of $E_{0,\gamma}(\cdot, B_1)$, and $R_0<1$ be such that $\frac{3}{4}R_0 > \gamma$. Then for every $m\in \N$ and $\varrho\leq R_0$ there exists $C_{0,m}'$ depending on $\C$, $m$, and $R_0$, such that 
 \begin{equation}\label{0707182305}
 \|u\|_{H^m(B_{3R_0/4};\Rn)}\leq  C'_{0,m} \|e(u)\|_{L^2(B_{R_0};\Mnn)}\,.
 \end{equation}
 \end{theorem}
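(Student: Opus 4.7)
The plan is to prove \eqref{0707182305} by induction on $m$, following the standard boundary regularity strategy for elliptic systems with flat boundary, as in \cite[Theorem~4.18]{McLean}. Local minimality yields the Euler-Lagrange identity
\[
\int_{B_1} \C e(u) : e(\varphi) \, dx = 0
\]
for all $\varphi \in H^1(B_1;\R^n)$ with compact support in $B_1$ and $\varphi = 0$ a.e.\ in $B_1 \setminus H_\gamma$. Equivalently, $u$ solves $-\mathrm{div}(\C e(u)) = 0$ weakly in $B_1 \cap H_\gamma$ with homogeneous Dirichlet datum on the flat portion $\{x_n = -\gamma\} \cap B_1$ of the boundary.

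For the base case I would combine Korn's inequality on $B_{R_0}$ with the Poincaré-type inequality that follows from $u = 0$ on $B_{R_0} \cap \{x_n \leq -\gamma\}$. Since $\gamma \leq \tfrac{3}{4} R_0$, the Lebesgue measure of this zero set is bounded below by a positive constant depending only on $R_0$ and $n$, yielding a uniform (in $\gamma$) estimate $\|u\|_{H^1(B_{R_0})} \leq C(R_0, \C, n)\,\|e(u)\|_{L^2(B_{R_0})}$.

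For the inductive step I would fix nested radii $R_0 \geq r_0 > r_1 > \cdots > r_m = \tfrac{3}{4} R_0$ with cutoffs $\phi_k \in C_c^\infty(B_{r_{k-1}})$ equal to $1$ on $B_{r_k}$, and argue in two stages. First, \emph{tangential} regularity via Nirenberg difference quotients: for $i \in \{1, \ldots, n-1\}$ and $|h|$ small, the test function $\varphi = \tau_{-h}^i(\phi_k^2 \tau_h^i u)$ is admissible, because tangential translations and the cutoff both preserve the Dirichlet condition on $\{x_n = -\gamma\}$; a standard manipulation, together with ellipticity \eqref{eq:hpC} and Korn's inequality, produces an $h$-uniform $L^2$ bound on $\phi_k\, e(\tau_h^i u)$, and passing $h \to 0$ yields $\partial_i u \in H^1(B_{r_k})$. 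Iterating over tangential indices controls $\partial^\alpha u$ whenever $\alpha$ involves only $\partial_1, \ldots, \partial_{n-1}$. Second, the PDE trades each $\partial_n^2$ for two tangential derivatives: writing the componentwise form $\sum_{j,k,l} \C_{ijkl} \partial_j \partial_k u_l = 0$ and isolating the $(j,k) = (n,n)$ terms gives
\[
\sum_l A_{il}\, \partial_n^2 u_l = -\sum_l \sum_{\substack{j,k \\ (j,k) \neq (n,n)}} \C_{ijkl}\, \partial_j \partial_k u_l,
\]
where the matrix $A_{il} = \C_{inin}$ is positive definite by the Legendre-Hadamard condition implicit in \eqref{eq:hpC}. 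Inverting $A$ expresses $\partial_n^2 u$ algebraically in terms of derivatives with at least one tangential component; commuting this identity with further tangential difference quotients recovers all mixed derivatives of order up to $m$.

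The main technical obstacle will be tracking the dependence of $C'_{0,m}$ on the data: it must depend only on $\C$, $m$, and $R_0$, in particular not on $\gamma \in [0, 1/2]$. This uniformity is most delicate at the base step, where the Korn-Poincaré constant could in principle degenerate as $\gamma \to \tfrac{3}{4} R_0$, but the quantitative bound $|B_{R_0} \setminus H_\gamma| \geq c(R_0, n) > 0$ granted by $\gamma \leq \tfrac{3}{4} R_0$ keeps it controlled. In the inductive step the tangential difference quotient method does not see $\gamma$ at all, and the algebraic inversion to recover $\partial_n^2 u$ depends only on $\C$; the only geometric scales that enter are the gaps $r_{k-1} - r_k$, which are fixed once $R_0$ and $m$ are chosen.
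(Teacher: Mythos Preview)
Your proposal is correct and follows essentially the same route as the paper's proof: both derive the Euler--Lagrange identity, use tangential difference quotients with a cutoff to gain one derivative in the directions $e_1,\dots,e_{n-1}$, recover $\partial_n^2 u$ algebraically from $\mathrm{div}\,\C e(u)=0$ by inverting the positive-definite block $(\C_{inln})_{i,l}$, and then iterate. The paper packages the test function as $v=\nabla_{l,h}(\psi u)$ rather than your $\tau_{-h}^i(\phi_k^2\tau_h^i u)$, but these are equivalent standard variants; the paper also tracks an explicit scaling $\|u\|_{H^{m+1}(B_\varrho)}\le C_\C(R_0-\varrho)^{-2m}\|e(u)\|_{L^2(B_{R_0})}$ rather than fixing a nested sequence of radii in advance.

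One small point worth sharpening: your sentence ``the tangential difference quotient method does not see $\gamma$ at all'' is true, but only because at that stage Korn is applied to the \emph{compactly supported} function $\phi_k\,\tau_h^i u$ (with the universal $H^1_0$ constant), not to $\tau_h^i u$ via the zero set $B_{r_k}\setminus H_\gamma$; the latter choice would introduce a $\gamma$-dependence through $|B_{r_k}\setminus H_\gamma|$, which can degenerate as $r_k\downarrow \tfrac34 R_0$ and $\gamma\uparrow\tfrac34 R_0$. The paper makes exactly this choice (Korn and Poincar\'e for $H^1_0$ on $\nabla_{l,h}(\psi u)$), and you should make it explicit as well. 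Also, in your display for the normal block you presumably mean $A_{il}=\C_{inln}$ rather than $\C_{inin}$.
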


  \begin{proof}
  First let us prove that for any $\varrho < R_0$ it holds
 \begin{equation}\label{0807180114}
 \|u\|_{H^2(B_\varrho)} \leq C_{\C} (R_0- \varrho)^{-2} \|e(u)\|_{L^2(B_{R_0})}\,.
 \end{equation}
Let us fix $\varrho < R_0$ and take a cut-off function $\psi$ between $B_{\varrho}$ and $B_{R_0}$, that is $\psi \colon B_1 \to [0,1]$, $\psi \in C^\infty_c(B_{R_0})$, $\psi = 1$ in $B_{\varrho}$, such that
 \begin{equation}\label{0807182255}
 \| \nabla \psi\|^2_{L^\infty(B_1)} + \| \mathrm{D}^2 \psi\|_{L^\infty(B_1)} \leq C (R_0-\varrho)^{-2}\,,  
 \end{equation}
 for a universal constant $C$. For any $w \colon B_1 \to \R^s$, $s \geq 1$, and $x \in B_{1-h}$ we denote 
 \begin{equation*}
 \nabla_{l,h} w (x) := \frac{w(x+h e_l)- w(x)}{h}
 \end{equation*}
 the difference quotient in the direction $e_l$, where $e_l$ is the $l$-th element of the canonical basis of $\Rn$.
 Since $\psi u$ has compact support in $B_{R_0}$ and $\C$ has constant coefficients (with respect to $x$) we have that for $h$ small
 \begin{equation*}
 \int \limits_{B_{R_0}} \C\, \nabla_{l,h}(e(\psi u)) \colon e(v) \dx = -  \int \limits_{B_{R_0}} \C e(\psi u) \colon \nabla_{l,-h} (e(v)) \dx 
 \end{equation*}
 for any $v\in H^1(B_{R_0})$. Then for $h$ small and $v \in H^1_0(B_{R_0}; \Rn)$, $v=0$ in $B_{R_0}\sm H_\gamma$
 \begin{equation}\label{0807180055}
 \begin{split}
 \bigg|  & \int \limits_{B_{R_0}} \C\, \nabla_{l,h}(e(\psi u)) \colon e(v) \dx   \bigg| = \bigg|  \int \limits_{B_{R_0}} \C e(\psi u) \colon \nabla_{l,-h} (e(v)) \dx   \bigg|  \\& \leq \bigg| \int \limits_{B_{R_0}} \C  (\nabla \psi \odot u)  \colon e(\nabla_{l,-h} v) \dx \bigg|  = \bigg| \int \limits_{B_{R_0}} \C \,  \nabla_{l,h}(\nabla \psi \odot u)  \colon e(v) \dx \bigg| \\& \leq C_{\C} \Big( \| \mathrm{D}^2  \psi \|_{L^{\infty}(B_1)} \|u\|_{L^2(B_{R_0})} + \|\nabla \psi\|_{L^\infty(B_1)} \|e(u)\|_{L^2(B_{R_0})} \Big) \|e(v)\|_{L^2(B_{R_0})} \\& \leq C_{\C} (R_0- \varrho)^{-2} \|e(u)\|_{L^2(B_{R_0})}  \|e(v)\|_{L^2(B_{R_0})}\,,
 \end{split}
 \end{equation}
 where in the first inequality we have used that $e(\psi u) = \psi\, e(u) + \nabla \psi \odot u$ and the Euler equation for minimisers of \eqref{0906181125}
 \begin{equation*}
 \int \limits_{B_{R_0}} \C e(u) \colon e(v) \dx = 0 \quad \text{for any }v \in H^1_0(B_{R_0}; \Rn),\, v=0\text{ in }B_{R_0}\sm H_\gamma\,,
 \end{equation*}
 and the last inequality follows from \eqref{0807182255} plus Poincaré's and Korn's inequality for $H^1_0$ functions (cf.\ also below in \eqref{0807180111}).
 We now take, for $l=1, \dots, n{-}1$, $v:= \nabla_{l,h}(\psi u)$ as test function; indeed, by the form of $H_\gamma$ we have that\[\nabla_{l,h}(\psi u) \in H^1_0(B_{R_0}; \Rn),\quad \nabla_{l,h}(\psi u) =0\text{ in }B_{R_0}\sm H_\gamma\,.\]
 With this choice, we get
\begin{equation} \label{0807180111}
 \bigg|  \int \limits_{B_{R_0}} \C\, \nabla_{l,h}(e(\psi u)) \colon e(v) \dx   \bigg| \geq C_{\C} \| e(\nabla_{l,h}(\psi u))\|^2_{L^2(B_{R_0})} \geq C_{\C} \| \nabla_{l,h} u \|^2_{H^1(B_\varrho)} \,,
 \end{equation}
 since 
 \begin{equation*}\label{0807180120}
 \| \nabla_{l,h} u \|^2_{H^1(B_\varrho)}=\| \nabla_{l,h}(\psi u) \|^2_{H^1(B_{\varrho})}  \leq C \| e(\nabla_{l,h}(\psi u))\|^2_{L^2(B_{\varrho})}
 \end{equation*} 
for a universal constant $C$: this holds by the combination of Korn's inequality in $H^1_0(B_{\varrho})$ 
 \[
 \| \nabla\big(\nabla_{l,h}(\psi u) \big) \|^2_{L^2(B_\varrho)}  \leq 2 \| e(\nabla_{l,h}(\psi u))\|^2_{L^2(B_\varrho)}\,,
 \]
 and Poincaré's inequality in $H^1_0(B_{\varrho})$
 \[
 \| \nabla_{l,h}(\psi u) \|^2_{H^1(B_\varrho)}  \leq \big(1 + 9 R_0^2 / 16 \big) \| \nabla\big(\nabla_{l,h}(\psi u) \big) \|^2_{L^2(B_\varrho)}\,,
 \]
being $\varrho \leq \frac{3}{4}R_0$.
 
 As usual, to prove regularity of solutions to elliptic equations, the derivative $\partial_{n\,n} u$ is estimated by looking at the equation in weak form $\mathrm{div\,} \C e(u)=0$, that gives
 \begin{equation*}
 \| \partial_{n\,n} u\|_{L^2(B_\varrho)} \leq C_{\C} \big( \|u\|_{H^1(B_\varrho)} + \sum_{l=1}^{n{-}1} \| \partial_l u\|_{H^1(B_\varrho)} \big) \,.
 \end{equation*}
 Combining the estimate above with \eqref{0807180055} and \eqref{0807180111} (and standard properties of difference quotients), we obtain \eqref{0807180114}. Arguing in a similar way it is possible to show that if $\mathrm{div}\, \C e(w)=f$ in $B_{R_0} \cap H_\gamma$ and $w= 0$ in $B_{R_0} \sm H_\gamma$, then 
 \begin{equation}\label{0907180114}
 \|w\|_{H^2(B_\varrho)} \leq C_{\C} (R_0- \varrho)^{-2} \|e(w)\|_{L^2(B_{R_0})} + C_{\C} \|f\|_{L^2(B_{R_0})} \,.
 \end{equation}
 
 Now it is proven by induction that
 \begin{equation}\label{0807180135}
 \|u\|_{H^{m+1}(B_\varrho)} \leq C_{\C} (R_0- \varrho)^{-2m} \|e(u)\|_{L^2(B_{R_0})}\,,
 \end{equation}
the case $m=1$ being \eqref{0807180114}. For $l=1, \dots, n{-}1$ we have that $\mathrm{div}\, \C e(\partial_l u)$ is expressed in terms of derivatives of $u$ of order at most $m+1$ (cf.\ \cite[Lemma~4.13]{McLean}) and $\partial_l u= 0$ in $B_{R_0} \sm H_\gamma$, so that \eqref{0907180114} and  the induction assumption for $m$ give
 \begin{equation}\label{0807180201}
 \|\partial_l u\|_{H^{m+1}(B_\varrho)} \leq C_{\C} (R_0- \varrho)^{-2(m+1)} \|e(u)\|_{L^2(B_{R_0})}\,.
 \end{equation}
 It lasts to estimate $\partial^{m+2}_n u$, that is the derivative of $u$ taken $m+2$ times with respect to $e_n$. In order to do so, it is enough to apply $\partial^m_n$ to the explicit expression of $\partial_{n\,n}$ in terms of the other second order derivatives obtained from $\mathrm{div\,} \C e(u)=0$: then $\partial^{m+2}_n$ is a linear combination (trough combination of coefficients of $\C$) of the other derivatives of order $m+2$, already estimated in \eqref{0807180201}.
 We conclude \eqref{0707182305} by taking $\varrho= \frac{3}{4} R_0$ in \eqref{0807180135}.
 \end{proof}

\begin{remark}
From Theorem~\ref{teo:ellreg2App}, employing the Sobolev embedding $H^m \hookrightarrow C^1$ for any $m> 2 + n/2$ and recalling \eqref{eq:hpC}, we obtain Theorem~\ref{teo:ellreg2}.
\end{remark}

\end{appendices}

\bigskip
\noindent {\bf Acknowledgements.}
V.\ C.\ has been supported by a public grant as part of the \emph{Investissement d'avenir} project, reference ANR-11-LABX-0056-LMH, LabEx LMH, is currently funded by the Marie Sk\l odowska-Curie Standard European Fellowship No.\ 793018, and acknowledge the financial support from the Laboratory Ypatia of Mathematical Sciences LYSM and the Labo CMAP.


\begin{thebibliography}{10}

\bibitem{Amb89UMI}
{\sc L.~Ambrosio}, {\em A compactness theorem for a new class of functions of
  bounded variation}, Boll. Un. Mat. Ital. B (7), 3 (1989), pp.~857--881.

\bibitem{Amb90GSBV}
\leavevmode\vrule height 2pt depth -1.6pt width 23pt, {\em Existence theory for
  a new class of variational problems}, Arch. Rational Mech. Anal., 111 (1990),
  pp.~291--322.

\bibitem{AmbNew95}
{\sc L.~Ambrosio}, {\em A new proof of the {SBV} compactness theorem}, Calc.
  Var. Partial Differential Equations, 3 (1995), pp.~127--137.

\bibitem{AmbCosDM97}
{\sc L.~Ambrosio, A.~Coscia, and G.~Dal~Maso}, {\em Fine properties of
  functions with bounded deformation}, Arch. Rational Mech. Anal., 139 (1997),
  pp.~201--238.

\bibitem{AFP}
{\sc L.~Ambrosio, N.~Fusco, and D.~Pallara}, {\em Functions of bounded
  variation and free discontinuity problems}, Oxford Mathematical Monographs,
  The Clarendon Press, Oxford University Press, New York, 2000.

\bibitem{Bab15}
{\sc J.-F. Babadjian}, {\em Traces of functions of bounded deformation},
  Indiana Univ. Math. J., 64 (2015), pp.~1271--1290.

\bibitem{BabGia14}
{\sc J.-F. Babadjian and A.~Giacomini}, {\em Existence of strong solutions for
  quasi-static evolution in brittle fracture}, Ann. Sc. Norm. Super. Pisa Cl.
  Sci. (5), 13 (2014), pp.~925--974.

\bibitem{BelCosDM98}
{\sc G.~Bellettini, A.~Coscia, and G.~Dal~Maso}, {\em Compactness and lower
  semicontinuity properties in {${\rm SBD}(\Omega)$}}, Math. Z., 228 (1998),
  pp.~337--351.

\bibitem{Camp63}
{\sc S.~Campanato}, {\em Propriet\`a di h\"{o}lderianit\`a di alcune classi di
  funzioni}, Ann. Scuola Norm. Sup. Pisa (3), 17 (1963), pp.~175--188.

\bibitem{Cha03}
{\sc A.~Chambolle}, {\em A density result in two-dimensional linearized
  elasticity, and applications}, Arch. Ration. Mech. Anal., 167 (2003),
  pp.~211--233.

\bibitem{Cha04}
\leavevmode\vrule height 2pt depth -1.6pt width 23pt, {\em An approximation
  result for special functions with bounded deformation}, J. Math. Pures Appl.
  (9), 83 (2004), pp.~929--954.

\bibitem{CCF16}
{\sc A.~Chambolle, S.~Conti, and G.~A. Francfort}, {\em Korn-{P}oincar\'e
  inequalities for functions with a small jump set}, Indiana Univ. Math. J., 65
  (2016), pp.~1373--1399.

\bibitem{ChaConFra18ARMA}
\leavevmode\vrule height 2pt depth -1.6pt width 23pt, {\em Approximation of a
  brittle fracture energy with a constraint of non-interpenetration}, Arch.
  Ration. Mech. Anal., 228 (2018), pp.~867--889.

\bibitem{ChaConIur17}
{\sc A.~Chambolle, S.~Conti, and F.~Iurlano}, {\em Approximation of functions
  with small jump sets and existence of strong minimizers of {G}riffith’s
  energy}.
\newblock To appear on \textit{J.~Math.~Pures Appl.}, DOI
  https://doi.org/10.1016/j.matpur.2019.02.001.
  
\bibitem{CC17}
{\sc A.~Chambolle and V.~Crismale}, {\em A {D}ensity {R}esult in {$\rm GSBD^p$}
  with {A}pplications to the {A}pproximation of {B}rittle {F}racture
  {E}nergies}, Arch. Ration. Mech. Anal., 232 (2019), pp.~1329--1378.

\bibitem{CC18}
\leavevmode\vrule height 2pt depth -1.6pt width 23pt, {\em Compactness and
  lower semicontinuity in {$GSBD$}}, To appear on J. Eur. Math. Soc. (JEMS),
  Preprint arXiv:1802.03302 (2018).

\bibitem{CFI18nota}
{\sc S.~Conti, M.~Focardi, and F.~Iurlano}, {\em A note on the {Hausdorff}
  dimension of the singular set of solutions to elasticity type systems}.
\newblock To appear on \textit{Commun.~Contemp.~Math.}, DOI
  https://doi.org/10.1142/S0219199719500263.

\bibitem{CFI17Density}
\leavevmode\vrule height 2pt depth -1.6pt width 23pt, {\em Approximation of
  fracture energies with $p$-growth via piecewise affine finite elements},
  2018, In press on ESAIM Control Optim. Calc. Var., DOI
  \url{https://doi.org/10.1051/cocv/2018021}.

\bibitem{CFI17DCL}
\leavevmode\vrule height 2pt depth -1.6pt width 23pt, {\em Existence of strong
  minimizers for the {G}riffith static fracture model in dimension two}, Ann.
  Inst. H. Poincar\'{e} Anal. Non Lin\'{e}aire, 36 (2019), pp.~455--474.

\bibitem{Cri19}
{\sc V.~Crismale}, {\em On the approximation of {$SBD$} functions and some
  applications}, Preprint arXiv:1806.03076 (2018).

\bibitem{DM13}
{\sc G.~Dal~Maso}, {\em Generalised functions of bounded deformation}, J. Eur.
  Math. Soc. (JEMS), 15 (2013), pp.~1943--1997.

\bibitem{DMFraToa07}
{\sc G.~Dal~Maso, G.~A. Francfort, and R.~Toader}, {\em Quasistatic crack
  growth in nonlinear elasticity}, Arch. Ration. Mech. Anal., 176 (2005),
  pp.~165--225.

\bibitem{DMLaz10}
{\sc G.~Dal~Maso and G.~Lazzaroni}, {\em Quasistatic crack growth in finite
  elasticity with non-interpenetration}, Ann. Inst. H. Poincar\'e Anal. Non
  Lin\'eaire, 27 (2010), pp.~257--290.

\bibitem{DMMorSol92}
{\sc G.~Dal~Maso, J.-M. Morel, and S.~Solimini}, {\em A variational method in
  image segmentation: existence and approximation results}, Acta Math., 168
  (1992), pp.~89--151.

\bibitem{DMToa02}
{\sc G.~Dal~Maso and R.~Toader}, {\em A model for the quasi-static growth of
  brittle fractures: existence and approximation results}, Arch. Ration. Mech.
  Anal., 162 (2002), pp.~101--135.

\bibitem{David05Libro}
{\sc G.~David}, {\em Singular sets of minimizers for the {M}umford-{S}hah
  functional}, vol.~233 of Progress in Mathematics, Birkh\"auser Verlag, Basel,
  2005.

\bibitem{DeGCarLea}
{\sc E.~De~Giorgi, M.~Carriero, and A.~Leaci}, {\em Existence theorem for a
  minimum problem with free discontinuity set}, Arch. Rational Mech. Anal., 108
  (1989), pp.~195--218.

\bibitem{FonFus97}
{\sc I.~Fonseca and N.~Fusco}, {\em Regularity results for anisotropic image
  segmentation models}, Ann. Scuola Norm. Sup. Pisa Cl. Sci. (4), 24 (1997),
  pp.~463--499.

\bibitem{FraLar03}
{\sc G.~A. Francfort and C.~J. Larsen}, {\em Existence and convergence for
  quasi-static evolution in brittle fracture}, Comm. Pure Appl. Math., 56
  (2003), pp.~1465--1500.

\bibitem{FraMar98}
{\sc G.~A. Francfort and J.-J. Marigo}, {\em Revisiting brittle fracture as an
  energy minimization problem}, J. Mech. Phys. Solids, 46 (1998),
  pp.~1319--1342.

\bibitem{Fri19}
{\sc M.~Friedrich}, {\em A compactness result in {$GSBV^p$} and applications to
  {$\Gamma$-convergence} for free discontinuity problems}.
\newblock To appear on \textit{Calc. Var. Partial Differential Equations},
  Preprint arXiv:1807.03647 (2018).

\bibitem{Fri17ARMA}
\leavevmode\vrule height 2pt depth -1.6pt width 23pt, {\em A derivation of
  linearized {G}riffith energies from nonlinear models}, Arch. Ration. Mech.
  Anal., 225 (2017), pp.~425--467.

\bibitem{FriPWKorn}
\leavevmode\vrule height 2pt depth -1.6pt width 23pt, {\em A {P}iecewise {K}orn
  {I}nequality in {SBD} and {A}pplications to {E}mbedding and {D}ensity
  {R}esults}, SIAM J. Math. Anal., 50 (2018), pp.~3842--3918.

\bibitem{FriSol16}
{\sc M.~Friedrich and F.~Solombrino}, {\em Quasistatic crack growth in
  2d-linearized elasticity}, Ann. Inst. H. Poincaré Anal. Non Linéaire, 35
  (2018), pp.~27--64.

\bibitem{Griffith}
{\sc A.~A. Griffith}, {\em The phenomena of rupture and flow in solids},
  Philos.\ Trans.\ Roy.\ Soc.\ London Ser.\ A, 221 (1920), pp.~163--198.

\bibitem{Iur14}
{\sc F.~Iurlano}, {\em A density result for {GSBD} and its application to the
  approximation of brittle fracture energies}, Calc. Var. Partial Differential
  Equations, 51 (2014), pp.~315--342.

\bibitem{LemReviewMS}
{\sc A.~Lemenant}, {\em A selective review on {M}umford-{S}hah minimizers},
  Boll. Unione Mat. Ital., 9 (2016), pp.~69--113.

\bibitem{MadSol01}
{\sc F.~Maddalena and S.~Solimini}, {\em Lower semicontinuity properties of
  functionals with free discontinuities}, Arch. Ration. Mech. Anal., 159
  (2001), pp.~273--294.

\bibitem{McLean}
{\sc W.~McLean}, {\em Strongly elliptic systems and boundary integral
  equations}, Cambridge University Press, Cambridge, 2000.

\bibitem{Mor66}
{\sc C.~B. Morrey, Jr.}, {\em Multiple integrals in the calculus of
  variations}, Classics in Mathematics, Springer-Verlag, Berlin, 2008.
\newblock Reprint of the 1966 edition [MR0202511].

\bibitem{MumSha}
{\sc D.~Mumford and J.~Shah}, {\em Boundary detection by minimizing
  functionals}.
\newblock Proc. IEEE Conf. on Computer Vision and Pattern Recognition, San
  Francisco, 1985.

\bibitem{Tem}
{\sc R.~Temam}, {\em Mathematical problems in plasticity}, Gauthier-Villars,
  Paris, 1985. Translation of Problèmes mathématiques en plasticité.
  Gauthier-Villars, Paris, 1983.

\end{thebibliography}

\end{document}